\documentclass[a4paper,10pt,twoside,english]{amsart}

\usepackage[final]{microtype}

\usepackage[utf8]{inputenc}
\usepackage{amsthm}
\usepackage{amsmath}
\usepackage{eucal}
\usepackage{enumitem}
\usepackage{tikz}
\usepackage[sort,nocompress]{cite}
\usetikzlibrary{matrix,arrows,decorations.pathmorphing,calc,cd,shapes,fit,decorations.pathreplacing,shapes.geometric}
\usepackage{cancel}
\usepackage{tikz-cd}
\usepackage{tensor}
\usepackage{enumitem}
\usepackage{mathtools, mathdots}
\usepackage{amssymb,graphicx}
\usetikzlibrary{shapes}
\usepackage{diagbox}

\makeatletter
\newcommand{\thickhline}{%
    \noalign {\ifnum 0=`}\fi \hrule height 1pt
    \futurelet \reserved@a \@xhline
}
\newcolumntype{"}{@{\hskip\tabcolsep\vrule width 1pt\hskip\tabcolsep}}
\makeatother

\tikzset{
   dashellipse/.style={ellipse,draw,dashed,inner xsep=-7pt,blue,fit={#1}}
}

\DeclareMathOperator{\add}{\mathsf{add}}

\DeclareMathOperator{\Hom}{\mathsf{Hom}}
\DeclareMathOperator{\End}{\mathsf{End}}

\DeclareMathOperator{\Ext}{\mathsf{Ext}}

\DeclareMathOperator{\Db}{\mathsf D^{\mathsf b}}

\DeclareMathOperator{\rad}{\mathsf{rad}}

\DeclareMathOperator{\minimum}{\mathsf{min}}
\DeclareMathOperator{\maximum}{\mathsf{max}}

\usepackage[utf8]{inputenc}
\usepackage{amsmath}
\usepackage{enumitem}
\usepackage[sort,nocompress]{cite}
\usepackage{tikz-cd}

\renewcommand{\to}{\longrightarrow}

\newcommand{\xto}{\xrightarrow}

\numberwithin{equation}{section}

\newtheorem{lemma}[equation]{Lemma}
\newtheorem{corollary}[equation]{Corollary}
\newtheorem{proposition}[equation]{Proposition}

\newtheorem*{theorem*}{Theorem}

\newtheorem*{proposition*}{Proposition}
\newtheorem*{definition*}{Definition}
\newtheorem*{example*}{Example}
\newtheorem*{lemma*}{Lemma}
\newtheorem*{remark*}{Remark}
\theoremstyle{definition}
\newtheorem*{exampleA11/rad5revisited}{Example~\ref{example:A11/rad5} (revisited)}
\newtheorem*{exampleA11/rad4revisited}{Example~\ref{example:A11/rad4} (revisited)}
\newtheorem*{examplesimplemindedrevisited}{Example~\ref{example:simpleminded} (revisited)}

\newtheorem{example}[equation]{Example}
\newtheorem{construction}[equation]{Construction}
\newtheorem*{construction*}{Construction}
\newtheorem{nonexample}[equation]{Non-example}
\newtheorem{nonexample*}{Non-example}

\newtheorem*{caveat*}{Caveat}
\newtheorem{liste}[equation]{List}
\newtheorem*{liste*}{List}
\newtheorem{remark}[equation]{Remark}

\usepackage[colorinlistoftodos]{todonotes}
\usepackage{hyperref}

\title{Non-piecewise hereditary Nakayama algebras}
\author[Fosse, Oppermann and Stai]{Didrik Fosse, Steffen Oppermann and Torkil Stai}
\address{Department of Mathematical Sciences, NTNU, 7491 Trondheim, Norway}
\email{steffen.oppermann@ntnu.no}
\email{torkil.stai@ntnu.no}
\email{didrikfosse@hotmail.com}
\subjclass[2020]{16E35, 16E10, 16G20}
\keywords{Piecewise hereditary algebra, Nakayama algebra, derived equivalence}
\thanks{The second named author  was supported by the Centre for Advanced Study (CAS) in Oslo, Norway, within the research project ``Representation theory'' during the 2022/23 academic year. The third named author was supported by Norges forskningsr{\aa}d, grant 302223.}
\begin{document}
\maketitle
\begin{abstract}
Happel and Seidel gave a classification of piecewise hereditary Nakayama algebras, where the relations are given by some power of the radical.

Here we explore what happens for general relations. We develop techniques for showing that a given algebra is not piecewise hereditary, illustrating them on numerous mid-sized examples.  Then we observe cases where the property of being non-piecewise hereditary can be extended to other (larger) Nakayama algebras.
While a complete classification remains elusive, we are able to identify two types of patterns of relations preventing piecewise heredity, indicating that for large quivers many Nakayama algebras are non-piecewise hereditary.

\end{abstract}
\section{Introduction}

Since their introduction in \cite{Nakayama1940}, Nakayama algebras have been the subject of extensive study. While their representation theory is well understood (see e.g.\ \cite{ASS2006}), there are still a number of open questions regarding their homological nature. In particular, the general understanding of derived categories of Nakayama algebras is lacking.

While developing a complete classification of Nakayama algebras up to derived equivalence is out of the scope of this paper, a related (and hopefully simpler) problem is determining whether a given Nakayama algebra is piecewise hereditary. An algebra $\Lambda$ is \textbf{piecewise hereditary} if there is a hereditary abelian category $\mathcal H$ and an equivalence of triangulated categories
\begin{equation}\label{equation:pwhdef}
\Db(\Lambda) \simeq \Db(\mathcal H). \tag{$\ast$}
\end{equation}

A Nakayama algebra is either \textbf{cyclic}, that is its ordinary quiver is an oriented cycle, or \textbf{linear}, that is its ordinary quiver is linearly ordered \( \mathbb A_n \). Since all piecewise hereditary algebras are triangular (see \cite[Theorem~1.1(i)]{Happel-Reiten-Smalo}), we focus our attention on the linear case. More precisely, take a field $k$ and an admissible ideal $J$ in the path algebra $k\mathbb A_n$. We are interested in the following question. \[\textit{When is the algebra $k\mathbb A_n/J$ piecewise hereditary?}\]

Happel and Seidel \cite[Table~1]{MR2736030} gave a complete solution in the case where $J$ is a power of the radical and $k$ is algebraically closed. Beyond this setup it seems that little is known, maybe surprisingly given how well we know the module categories of Nakayama algebras.

\bigskip
Concurrent to this paper, the first named author showed that linear Nakayama algebras whose relations overlap by at most one arrow are always piecewise hereditary \cite{Fosse2023}. Research for both these papers started with his combinatorial tilting mutation rule \cite{Fosse2021}, and computer experiments based on that rule.

The number of algebras of the form $k\mathbb A_n/J$ equals the $(n-1)$-th Catalan number, see e.g.\ \cite{MR3049570}. Exhaustive searches show each such algebra is piecewise hereditary as long as $n\le 8$. Among the 1430 quotients of $k\mathbb A_9$ there is precisely one algebra which is not piecewise hereditary (see Example~\ref{example:A9}), namely

\begin{equation}\label{equation:introquiver}
\begin{tikzcd}
1 \ar[r] \ar[rrr,bend left,densely dotted,-] & 2 \ar[r] & 3 \ar[r] \ar[rrr,bend right,densely dotted,-] & 4 \ar[r] \ar[rrr,bend left,densely dotted,-] & 5 \ar[r] & 6 \ar[r] \ar[rrr,bend right,densely dotted,-] & 7 \ar[r] & 8 \ar[r] & 9. \tag{$\ast\ast$}
\end{tikzcd}
\end{equation}

This might give the impression that ``most" Nakayama algebras are piecewise hereditary. However, as we will argue here, this is only an artefact due to the smallness of the $n$ for which computer experiments are feasible. In fact we will see that as $n$ grows there is plenty of opportunity for $k\mathbb A_n/J$ to \emph{not} be piecewise hereditary.

\subsection*{Summary of results}
Inspired by---but not actually using---Chen and Ringel's \cite{MR3896230}, we establish two methods for showing that a given linear Nakayama algebra is not piecewise hereditary: In Proposition~\ref{prop.coarse_fine} we use a rather explicit construction of a complex, while in Lemma~\ref{lemma:taupathimpliesnotpwh} we use Auslander--Reiten translation in the derived category. Both let us establish a handful of concrete quotients of $k\mathbb A_n$ (with reasonably small $n$) which are not piecewise hereditary. Some of these algebras also serve as the starting point for more general patterns observed later.

One way to extend our examples is via derived equivalence: In Section~\ref{sect:derived_equiv} we give two general types of derived equivalences between Nakayama algebras. Firstly, in Corollary~\ref{corollary:lengthtworelations} we see that relations of length two do not affect the derived equivalence type. Secondly, in Proposition~\ref{proposition:doubleMutation} we observe that certain tilting mutations of Nakayama algebras give new Nakayama algebras.

Beyond derived equivalences, Corollary~\ref{corollary:introducevertex} provides recipes for adding vertices (and appropriate relations) which preserve the property of being non-piecewise hereditary. Application of these results to our initial examples helps pinpoint certain traits of $J$ which imply that $k\mathbb A_n/J$ is not piecewise hereditary, and we investigate such patterns of relations in two directions in particular:

Proposition~\ref{proposition:A13tworelations} shows that a Nakayama algebra is not piecewise hereditary if it has a pair of relations whose overlap is at least six arrows, with at least three arrows between their starts, and at least three arrows between their ends. The simplest such algebra is
\begin{equation*}
\begin{tikzcd}[column sep = small]
1 \ar[r] \ar[rrrrrrrrr, bend left, densely dotted, -] & 2 \ar[r] & 3 \ar[r] & 4 \ar[r] \ar[rrrrrrrrr, bend right, densely dotted, -]& 5 \ar[r] & 6 \ar[r] & 7 \ar[r] & 8 \ar[r] & 9 \ar[r] & 10 \ar[r] & 11 \ar[r] & 12 \ar[r] & 13.
\end{tikzcd}
\end{equation*}

Similarly, Proposition~\ref{proposition:A9extended} essentially says that if a Nakayama algebra has a pair of relations with an overlap of two or more arrows, with a relation on each side of the pair, then the algebra is not piecewise hereditary. Here the archetypical picture is the one given in \eqref{equation:introquiver} above.

\bigskip
Finally, note that in \cite{MR2736030} the assumption $k=\overline k$ let the authors invoke Happel's characterization of hereditary abelian categories \cite{MR1827736} i.e.,\ a \emph{list} of those $\mathcal H$ that can occur in the equivalence (\ref{equation:pwhdef}): Such an $\mathcal H$ will be either the module category of a hereditary algebra or derived equivalent to a canonical algebra. So in order to show that a given algebra is not piecewise hereditary, it sufficed to observe that it does not have the same Coxeter polynomial as any algebra from the \emph{list}.

There is no guarantee that such an approach will work, however, as Coxeter polynomials do not \emph{characterize} derived equivalence classes. Indeed, also among the quotients of $k\mathbb A_n$ there are algebras which share a common Coxeter polynomial where one is piecewise heredetary and the other is not (see Remark~\ref{remark:Coxeter}). Without appealing to the \emph{list} or to Coxeter polynomials, or requiring algebraic closedness of the base field, in Proposition~\ref{proposition:HS} we recover the part of Happel--Seidel's Table~1 which consists of non-piecewise hereditary Nakayama algebras.

\subsection*{Notation and conventions}

We will focus on linear Nakayama algebras, that is algebras of the form \( k \mathbb{A}_n / J \). Thus we always implicitly assume that the Nakayama algebras we encounter are linear, and that their vertices are numbered from $1$ to $n$. When referring to the relations of a Nakayama algebra we mean a minimal set of relations.

We consider covariant representations, so the indecomposable projective module \( P_i \) has as $k$-basis all paths starting in vertex \( i \). In particular there will always be non-zero maps \( P_i \to P_{i-1} \), but not the other way.

\section{Derived equivalences between Nakayama algebras} \label{sect:derived_equiv}

\subsection*{Relations of length two}

The following observation shows that for the purpose of determining derived equivalence, relations of length two may safely be ignored:

\begin{corollary}\label{corollary:lengthtworelations}
Let $\Lambda$ and $\Gamma$ be Nakayama algebras such that one can be obtained from the other by adding and removing length-two relations.

Then $\Lambda$ and $\Gamma$ are derived equivalent.
\end{corollary}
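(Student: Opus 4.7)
The plan is to reduce to the atomic case of a single length-two relation being added or removed, and then to construct an explicit tilting complex realising the derived equivalence. Since the operation of adding or removing length-two relations can be carried out one at a time, and derived equivalence is transitive, it suffices to prove the statement when $\Gamma$ is obtained from $\Lambda$ by adjoining a single length-two relation $\alpha_{i+1}\alpha_i = 0$. Equivalently, if we write $\ell_j$ for the Loewy length of the indecomposable projective at vertex $j$, then $\ell_i^{\Gamma} = 2$ while $\ell_i^{\Lambda} > 2$, and $\ell_j^{\Gamma} = \ell_j^{\Lambda}$ for $j \neq i$; in particular $\Gamma$ is a quotient of $\Lambda$.

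To realise the derived equivalence I would build a tilting complex $T = \bigoplus_j T_j$ in $\Db(\Lambda)$ with endomorphism algebra $\Gamma$. For each $j \neq i$, keep $T_j = P_j^{\Lambda}$, while replacing $P_i^{\Lambda}$ with a two-term complex
\[
T_i \;=\; \bigl[\, P_{i+2}^{\Lambda} \to P_i^{\Lambda}\, \bigr]
\]
placed in degrees $-1$ and $0$, whose differential has image $\rad^2 P_i^{\Lambda}$. The total composition factors of $T_i$ match those of $P_i^{\Gamma} = (S_i \mid S_{i+1})$, so at the level of $K_0$ the summands of $T$ are precisely the classes of the indecomposable projectives of $\Gamma$.

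It then remains to verify that $T$ is rigid (no self-extensions in non-zero degrees), that it generates $\Db(\Lambda)$ as a thick subcategory, and that $\End_{\Db(\Lambda)}(T) \cong \Gamma$. The main obstacle is the last point: one has to compute the Hom spaces between $T_i$ and its neighbours $T_{i \pm 1}$ in the derived category and check that the only new relation among the resulting maps is precisely the length-two relation $\alpha_{i+1}\alpha_i = 0$, with no other unintended relations creeping in. Since $T_i$ may have non-trivial cohomology in degree $-1$ (whenever $\ell_{i+2}^{\Lambda} > \ell_i^{\Lambda} - 2$), Homs into and out of $T_i$ must be computed as chain maps modulo homotopy, not simply as module maps between cohomologies, and it is here that the bookkeeping of radical filtrations and composition factors near vertex $i$ will be most delicate.
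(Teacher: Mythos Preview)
Your reduction to a single length-two relation is fine, but the proposed tilting complex is not rigid. Take the smallest possible instance: $\Lambda = k\mathbb{A}_4$ with no relations, $\Gamma$ obtained by imposing $1 \dashrightarrow 3$, so $i = 1$. Your complex is $T_1 = [P_3 \to P_1]$ in degrees $-1,0$ together with $T_j = P_j$ for $j = 2,3,4$. Then
\[
\Hom_{\Db(\Lambda)}(T_1, T_2[1]) \;\cong\; \Hom_\Lambda(P_3, P_2)\big/\{\,f \text{ factoring as } P_3 \to P_1 \xrightarrow{h} P_2\,\},
\]
and since $\Hom_\Lambda(P_1,P_2)=0$ while $\Hom_\Lambda(P_3,P_2)\cong k$, this space is nonzero. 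The obstruction is not specific to this example: for arbitrary $\Lambda$ and $i$ one always has $\Hom_\Lambda(P_{i+2},P_{i+1})\neq 0$ and $\Hom_\Lambda(P_i,P_{i+1})=0$, so $\Hom(T_i, T_{i+1}[1]) \neq 0$ whenever $i+1$ is a vertex. Replacing a single indecomposable projective by a two-term complex cannot produce a tilting object here. (A smaller slip: adding $\alpha_{i+1}\alpha_i = 0$ will in general change $\ell_j$ for $j<i$ as well, since any longer path from such $j$ through $i$ now dies at $i+1$.)

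The paper's argument is structurally different. It proves a more general statement about triangular matrix algebras (Proposition~\ref{proposition:lengthtworelations}) and specialises. Working over the algebra \emph{with} the length-two relation, it builds a tilting \emph{module}: projectives at the vertices on one side of the distinguished vertex $i+1$, the simple $S_{i+1}$ in the middle, and injectives on the other side. The point is that one must alter an entire half of the module category, not just the single summand at vertex $i$; the triangular-matrix viewpoint makes the verification of the tilting axioms and the computation of the endomorphism ring essentially formal.
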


We dub this result a corollary since it is an application of the following more general fact.

\begin{proposition}\label{proposition:lengthtworelations}
Let $A$ and $B$ be $k$-algebras with the global dimension of $B$ finite, let $M$ be a left $A$-module, and let $N$ be a right $B$-module. Then the algebras \[\text{$\Lambda = \left(\begin{matrix}A&0&0\\M&k&0\\0&N&B\end{matrix}\right)$ and $\Gamma= \left(\begin{matrix}A&0&0\\M&k&0\\M\otimes_k N&N&B\end{matrix}\right)$}\] are derived equivalent.
\end{proposition}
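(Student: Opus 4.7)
The plan is to apply Rickard's derived Morita theorem: it suffices to exhibit a tilting complex $T \in \Db(\Lambda)$ whose endomorphism algebra, up to opposites, is isomorphic to $\Gamma$. Let $P_1, P_2, P_3$ denote the indecomposable projective left $\Lambda$-modules associated to the three diagonal idempotents of $A$, $k$, $B$. Since $\Lambda$ and $\Gamma$ differ only in the $(3,1)$-entry, two of the three projectives carry over unchanged into $T$, and the remaining summand $T_\star$ will be a bounded complex that encodes the new bimodule $M \otimes_k N$. The finite global dimension of $B$ is needed precisely to keep $T_\star$ bounded, by replacing any ordinary tensor over $B$ appearing in its construction with its derived version.

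The guiding toy example is $A = B = k$, $M = N = k$, in which case $\Lambda = k\mathbb A_3/\rad^2$ and $\Gamma = k\mathbb A_3$. There the construction collapses to an APR-type tilting module: take $T_\star = S_2 = P_2/P_3$ (the cokernel of the canonical inclusion $P_3 \hookrightarrow P_2$), so that $T = P_1 \oplus P_2 \oplus S_2$. A direct matrix computation shows that $\End_\Lambda(T)$ is isomorphic to $k\mathbb A_3 = \Gamma$ after a suitable relabelling of idempotents, and this is the pattern one wants to generalise.

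The three verifications to perform are: (a) $\Hom_{\Db(\Lambda)}(T, T[n]) = 0$ for $n \neq 0$, where the only nontrivial $\Ext$ groups involve $T_\star$ and reduce via its defining triangle to $\Ext$ computations over $B$, which vanish for large $|n|$ by finiteness of the global dimension; (b) $\thick(T) = \Db(\Lambda)$, since the missing projective is recovered from the other summands through the same triangle; and (c) $\End_{\Db(\Lambda)}(T)$ matches $\Gamma$ up to opposites, with most entries of the matrix presentation agreeing on the nose, and a single decisive entry equal to $M \otimes_k N$, corresponding to the new $(3,1)$-position of $\Gamma$.

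The main obstacle is step (c). The crux is to show that $M \otimes_k N$ arises as the appropriate derived $\Hom$ between summands of $T$ with the correct $B$-$A$-bimodule structure, tracking how the left $A$-action on $M$, the right $B$-action on $N$, and the actions inherited by $T_\star$ conspire to reproduce exactly the $(3,1)$-bimodule of $\Gamma$. The hypothesis on $\gldim B$ enters precisely at this computation, in order to resolve the relevant tensor product without leaving the bounded derived category.
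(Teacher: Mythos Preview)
Your plan has a genuine gap: the claim that ``two of the three projectives carry over unchanged into $T$'' is an artefact of the toy example and fails in general. In the case $A=B=M=N=k$ it happens that the paper's third summand $T_3=\left(\begin{smallmatrix}0\\DN\\DB\end{smallmatrix}\right)=\left(\begin{smallmatrix}0\\k\\k\end{smallmatrix}\right)$ coincides with $P_2$, so the tilting object $P_1\oplus S_2\oplus T_3$ looks like ``$P_1\oplus P_2$ plus one new thing''. For general $B$ this coincidence disappears: if you keep $P_2$ as a summand then $\End_\Lambda(P_2)=k$, forcing $P_2$ to occupy the middle diagonal slot of $\Gamma$; but then $\Hom_\Lambda(P_2,T_\star)$ must vanish (the $(2,3)$ entry of $\Gamma$ is zero), while any candidate $T_\star$ with $\End(T_\star)\cong B$ and $\Hom(T_\star,P_2)\cong N$ will receive nonzero maps from $P_2$. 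Concretely, for the paper's $T_3$ one has $\Hom_\Lambda(P_2,T_3)=e_2T_3=DN\neq 0$ whenever $N\neq 0$.

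The paper therefore replaces \emph{two} of the three block projectives, not one: it takes $T=T_1\oplus T_2\oplus T_3$ with $T_1=P_1$, $T_2=\left(\begin{smallmatrix}0\\k\\0\end{smallmatrix}\right)$ the simple at the middle vertex, and $T_3=\left(\begin{smallmatrix}0\\DN\\DB\end{smallmatrix}\right)$. This is a genuine \emph{module}, and the endomorphism ring is computed directly to be $\Gamma$ (the crucial $(3,1)$ entry is $\Hom_\Lambda(T_3,T_1)=\Hom_k(DN,M)\cong M\otimes_k N$). The hypothesis $\gldim B<\infty$ is not used to bound a complex as you suggest, but to guarantee that $T_2$ and $T_3$, which are injective over the corner algebra $\left(\begin{smallmatrix}k&0\\N&B\end{smallmatrix}\right)$, have finite projective dimension over $\Lambda$; this is what makes $T$ a tilting module. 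Your outline never specifies $T_\star$ beyond the degenerate case, and the natural guesses extrapolated from $S_2=P_2/P_3$ do not produce the required endomorphism algebra once $B\neq k$.
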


In terms of quivers and relations, $\Lambda$ corresponds to something like
\[
\tikzset{
F/.style = {ellipse, draw=blue, dashed,
        inner xsep=-2mm,inner ysep=-4mm, rotate=0,
        fit=#1}
    }
\begin{tikzcd}[row sep=.2cm,
            arrows = dash,
execute at end picture = {
\node[F = (\tikzcdmatrixname-1-1) (\tikzcdmatrixname-1-3) (\tikzcdmatrixname-5-3) (\tikzcdmatrixname-5-1)] {};
\node[F = (\tikzcdmatrixname-1-5) (\tikzcdmatrixname-1-7) (\tikzcdmatrixname-5-5) (\tikzcdmatrixname-5-7)] {};
                    }% end of execute at end picture
                ]
\phantom{\bullet}& \phantom{\bullet}& \phantom{\bullet}&[+10pt] \phantom{\bullet}&[+10pt] \phantom{\bullet}& \phantom{\bullet}& \phantom{\bullet} \\
\phantom{\bullet}& \phantom{\bullet}& \phantom{\bullet}\ar[dr,->,shorten <= 1em]& \phantom{\bullet}& \phantom{\bullet}& \phantom{\bullet}& \phantom{\bullet} \\
\phantom{\bullet}& A& \phantom{\bullet} \ar[r,->,shorten <= 1em]& \bullet \ar[r,->,shorten >= 1em] \ar[dr,->,shorten >= 1em] & \phantom{\bullet}& B& \phantom{\bullet} \\
\phantom{\bullet}& \phantom{\bullet}& \phantom{\bullet} \ar[ur,->,shorten <= 1em]& \phantom{\bullet}& \phantom{\bullet}& \phantom{\bullet}& \phantom{\bullet} \\
\phantom{\bullet}& \phantom{\bullet} & \phantom{\bullet} & \phantom{\bullet}& \phantom{\bullet}& \phantom{\bullet}& \phantom{\bullet} \\
\end{tikzcd}
\]
with each path from $A$ to $B$ being a relation, while $\Gamma$ is the same quiver but with \emph{no} relations from $A$ to $B$. Clearly, Proposition~\ref{proposition:lengthtworelations} implies the claim in Corollary~\ref{corollary:lengthtworelations}.

\begin{proof}[Proof of Proposition~\ref{proposition:lengthtworelations}]
Consider the $\Lambda$-module
\[T=\underset{\substack{\uparrow\\\mathclap{T_1}}}{\left(\begin{matrix}A\\M\\0\end{matrix}\right)} \oplus
\underset{\substack{\uparrow\\\mathclap{T_2}}}{\left(\begin{matrix}0\\k\\0\end{matrix}\right)} \oplus \underset{\substack{\uparrow\\\mathclap{T_3}}}{\left(\begin{matrix}0\\DN\\DB\end{matrix}\right)} .\]
Since
\begin{align*}
\End_{\Lambda}(T) & = \left(\begin{matrix}\Hom_{\Lambda}(T_1,T_1) & \Hom_{\Lambda}(T_1,T_2) & \Hom_{\Lambda}(T_1,T_3) \\ \Hom_{\Lambda}(T_2,T_1) & \Hom_{\Lambda}(T_2,T_2) & \Hom_{\Lambda}(T_2,T_3) \\ \Hom_{\Lambda}(T_3,T_1) & \Hom_{\Lambda}(T_3,T_2) & \Hom_{\Lambda}(T_3,T_3) \end{matrix}\right) \\
 & = \left(\begin{matrix}A&0&0\\M&k&0\\\Hom_{k}(DN,M)&D^2N&\End_{\Lambda}(DB)\end{matrix}\right) \\
 & = \left(\begin{matrix}A&0&0\\M&k&0\\M\otimes_k N&N&B\end{matrix}\right)
= \Gamma
\end{align*}
it suffices to show that $T$ is a tilting module of finite projective dimension.

$T_1$ is projective. On the other hand, $T_2$ and $T_3$ are injectives over the algebra \[\left(\begin{matrix}k&0\\N&B\end{matrix}\right),\] so we can take projective resolutions over the latter. Since $B$ has finite global dimension, it follows that the projective dimension of $T$ is finite. Since there are no extensions from $T_2 \oplus T_3$ to $T_1$ we also have $\Ext_{\Lambda}^{\ge1}(T,T)=0$.

Dually, since projectives over the algebra \[\left(\begin{matrix}k&0\\N&B\end{matrix}\right)\] have finite injective dimension, we can take resolutions over this last algebra in order to produce finite coresolutions by $\add T$.
\end{proof}

\subsection*{Double tilting mutation}

While a single tilting mutation will seldom give rise to a derived equivalence between Nakayama algebras (and the cases where it does are in fact covered by our discussion of relations of length two in the previous subsection), we do get interesting derived equivalences by tilting-mutating twice. More precisely, we have the following result.

\begin{proposition}\label{proposition:doubleMutation}
Let \( \Lambda \) be a Nakayama algebra. Assume that \( r \) is a relation from vertex \( s \) to \( t \), such that there is another relation starting in vertex \( s-1 \), but no relation starting in \( t-1 \).

Then \( \Lambda \) is derived equivalent to the Nakayama algebra obtained from it by
\begin{itemize}
\item if \( t \) is not the final vertex, adding a new relation, of the same length as \( r \), from \( s+1 \) to \( t+1 \);
\item shortening at the start every relation starting properly within \( r \) (i.e.\ in a vertex that \( r \) passes through, not including \( s \) or \( t \));
\item lengthening toward its end any relation ending properly within \( r \) --- this will generally include the second relation from the assumption.

\end{itemize}
\end{proposition}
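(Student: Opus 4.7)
The plan is to construct an explicit tilting complex $T$ over $\Lambda$ whose endomorphism algebra in $D^b(\Lambda)$ is the Nakayama algebra described in the statement. Since the result is labelled a \emph{double} tilting mutation, I expect $T$ to arise from the standard tilting $\Lambda = \bigoplus_{i=1}^{n} P_i$ by two successive exchanges of indecomposable summands, for instance via the Aihara--Iyama silting mutation machinery. The two hypotheses on $r$ tell us where to perform the mutations: the assumption that ``another relation starts at $s-1$'' strictly controls the length of $P_{s-1}$ and yields a usable approximation/exchange triangle near vertex $s$, whereas ``no relation starts at $t-1$'' forces $P_{t-1}$ to extend past vertex $t$, providing a second exchange triangle near vertex $t$. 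The natural candidates for the summands to swap out are therefore those tied to the endpoints of $r$, with their replacements given by cones of the canonical maps to adjacent projectives.

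Having fixed $T$, the proof then has three moving parts. First, one must verify the tilting axioms $\Ext^{>0}_\Lambda(T,T)=0$ and that $T$ generates $D^b(\Lambda)$; for a Nakayama algebra this is a direct bookkeeping with the intervals $[i,\tau(i)]$ of composition factors. Second, one identifies the quiver of $\End_{D^b(\Lambda)}(T)$ as $\mathbb{A}_n$ with arrows $i\to i+1$, so that the endomorphism algebra is again linear Nakayama on the same vertex set. Third, one computes the minimal relations of $\End_{D^b(\Lambda)}(T)$ and matches them to the three bullet points of the proposition: the new relation from $s+1$ to $t+1$ should appear as a newly vanishing composition between the two mutated summands, while the shortening of relations starting strictly inside $r$ and the lengthening of relations ending strictly inside $r$ should fall out of how compositions previously killed by $r$ get redistributed by the exchange triangles.

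The main obstacle is the third step. Relations of $\Lambda$ whose start lies in $\{s+1,\dots,t-1\}$ and relations whose end lies in the same interval must be followed simultaneously through both mutations, and one must also check that no spurious extra relations emerge. The case ``if $t$ is not the final vertex'' is the other delicate point: when $t=n$ there is no vertex $t+1$ for the new relation to land on, and the second mutation collapses, so that part of the statement must be handled (or trivially omitted) separately. Assuming the two hypotheses guarantee both exchange triangles, the interval structure of Nakayama projectives should, I expect, make the matching in (iii) tractable even if tedious.
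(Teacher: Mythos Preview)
Your overall strategy---two silting mutations, then compute the endomorphism algebra---matches the paper's, but your guesses about \emph{which} mutations to perform and \emph{how the hypotheses are used} are off, and getting these right is exactly what makes the argument short.

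The paper does not swap out two summands near the two endpoints of $r$. It left-mutates twice at the \emph{same} summand $P_t$. The hypothesis ``no relation starts in $t-1$'' makes the natural map $P_t\to P_{t-1}$ a monomorphism, so the first mutation replaces $P_t$ by its cokernel, the simple $S_{t-1}$. The relation $r$ itself (not the hypothesis about $s-1$) then forces $S_{t-1}$ to be the socle of $P_s$ and of no $P_j$ with $j>s$; hence the minimal left approximation of $S_{t-1}$ by the remaining projectives is $S_{t-1}\hookrightarrow P_s$, and the second mutation replaces $S_{t-1}$ by $P_s/S_{t-1}$. The final tilting object is therefore the \emph{module}
\[
\bigoplus_{i\neq t} P_i \;\oplus\; P_s/S_{t-1},
\]
not a genuine two-term complex. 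The hypothesis ``a relation starts in $s-1$'' enters only afterwards, and only to guarantee that $P_s/S_{t-1}$ maps nontrivially to $P_{s-1}$, so that the new summand slots between $P_s$ and $P_{s-1}$ and the endomorphism quiver is again linear $\mathbb{A}_n$.

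With this explicit tilting module the step you flag as the main obstacle becomes routine: a single summand has moved, so one only records which composites through the old $P_t$ or the old $P_s$ are rerouted or killed, and this immediately yields the three bullets (after relabelling vertices $s,\dots,t-1$ up by one and naming $P_s/S_{t-1}$ the new vertex $s$). The case $t=n$ needs no separate treatment: the same tilting module works, there is simply no vertex $t+1$ for a new relation to reach.
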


We call the new algebra obtained in this way \( L_t(\Lambda) \), since it is the result of left mutation. There is the dual notion of \( R_s(\Lambda) \). Note that left mutation tends to ``move relations to the right'', and vice versa.

The following illustrates the basic change to the relations in the assumption of Proposition~\ref{proposition:doubleMutation} from $\Lambda$ to $L_t(\Lambda)$.
If there are other relations in $\Lambda$ starting or ending properly within $r$, then their lengths will also be changed accordingly.

\medskip
$\Lambda\colon$
\vspace{-1em}
\begin{equation*}
\begin{tikzcd}[column sep = small]
\cdots \ar[r] & {\scriptstyle s-1} \ar[r] \ar[rrrr, bend left, densely dotted, -]& {\scriptstyle s} \ar[r] \ar[rrrrrr, bend left, densely dotted, -, "r"] & {\scriptstyle s+1} \ar[r] & \cdots \ar[r] & {\scriptstyle t'} \ar[r] & {\scriptstyle t'+1} \ar[r] & \cdots \ar[r] & {\scriptstyle t} \ar[r] & {\scriptstyle t+1} \ar[r] & \cdots
\end{tikzcd}
\end{equation*}

\medskip
$L_t(\Lambda)\colon$
\vspace{-1em}
\begin{equation*}
\begin{tikzcd}[column sep = small]
\cdots \ar[r] & {\scriptstyle s-1} \ar[r] \ar[rrrrr, bend left, densely dotted, -]& {\scriptstyle s} \ar[r] \ar[rrrrrr, bend left, densely dotted, -, "r"] & {\scriptstyle s+1} \ar[r] \ar[rrrrrr, bend left, densely dotted, -] & \cdots \ar[r] & {\scriptstyle t'} \ar[r] & {\scriptstyle t'+1} \ar[r] & \cdots \ar[r] & {\scriptstyle t} \ar[r] & {\scriptstyle t+1} \ar[r] & \cdots
\end{tikzcd}
\end{equation*}

\begin{remark}
We found this result through computer experiments using \cite{Fosse2021}. However, after the fact, it was easy enough to give a direct proof.
\end{remark}

\begin{proof}[Proof of Proposition~\ref{proposition:doubleMutation}]
We apply left tilting mutation at \( t \) twice.

In the first step, note that by our assumption of not having relations starting in \( t-1 \) the natural map \( P_t \to P_{t-1} \) is mono. Its cokernel is the simple top of \( P_{t-1} \), which we denote by \( S_{t-1} \). By the relation \( r \), this simple is the socle of \( P_s \), but will not appear in the socle of any projective with a higher index. Therefore we have a monomorphic left approximation by projectives \( S_{t-1} \to P_s \), whose cokernel is \( P_s / S_{t-1} \).

It follows from the general theory of tilting mutation that
\[ \bigoplus_{i \neq t} P_i \oplus P_s / S_{t-1} \]
is a tilting module.

Note that \( P_s / S_{t-1} \) naturally maps to \( P_{s-1} \)  --- this because we assumed there to be a relation starting in \( s-1 \). Therefore \( P_s / S_{t-1} \) slots in between \( P_s \) and \( P_{s-1} \), and the quiver of the endomorphism ring of our tilting module is linear again.

Finally, let us track what relations appeared in this endomorphism ring:
First, clearly there is a relation from \( P_{t-1} \) to \( P_s / S_{t-1} \). Second, any relation from \( P_s \) will be replaced by a new relation from \( P_s / S_{t-1} \), and any relation ending in \( P_t \) will be replaced by a new relation ending in \( P_{t-1} \).
Finally, the length of any relation which passed through \( P_t \) has been reduced by one, and the length of any relation which passed through \( P_s \) has been increased by one.

To finish up, we adjust the vertex names to match the ascending numbering from before the mutation. For each \( i \) satisfying \( s\leq i \leq t-1\), the vertex corresponding to \( P_i \) is named \( i+1 \), and the new vertex corresponding to \( P_s/S_{t-1} \) is named \( s \).
\end{proof}

\begin{example}\label{example:mutationToA11_5}
Let $\Lambda$ be given by
\begin{equation*}
\begin{tikzcd}[column sep = 1.8em]
1 \ar[r] \ar[rrrr, bend right, no head, densely dotted] & 2 \ar[r] \ar[rrrrrr, bend right, no head, densely dotted] &  3 \ar[r]  & 4 \ar[r] &  5 \ar[r] \ar[rrrrrr, bend right, no head, densely dotted] & 6 \ar[r] &  7 \ar[r] &  8 \ar[r] & 9 \ar[r] & 10 \ar[r] & 11.
\end{tikzcd}
\end{equation*}
By repeated and opportune application of $L_t$ and $R_s$ from Proposition~\ref{proposition:doubleMutation} we produce the following sequence of derived equivalent Nakayama algebras, which shows that there exists a derived equivalence between $\Lambda$ and $k \mathbb A_{11}/(\rad)^5$.

\medskip
$\Lambda_1 = L_8(\Lambda)\colon$
\begin{equation*}
\begin{tikzcd}[column sep = 1.8em]
1 \ar[r] \ar[rrrrr, bend right, no head, densely dotted] & 2 \ar[r] \ar[rrrrrr, bend right, no head, densely dotted] & 3 \ar[r] \ar[rrrrrr, bend right, no head, densely dotted] &  4 \ar[r] & 5 \ar[r] & 6 \ar[r] \ar[rrrrr, bend right, no head, densely dotted] &  7 \ar[r] & 8 \ar[r] & 9 \ar[r] & 10 \ar[r] & 11
\end{tikzcd}
\end{equation*}

$\Lambda_2 = L_9(\Lambda_1)\colon$
\begin{equation*}
\begin{tikzcd}[column sep = 1.8em]
1 \ar[r] \ar[rrrrrr, bend right, no head, densely dotted] & 2 \ar[r] & 3 \ar[r] \ar[rrrrrr, bend right, no head, densely dotted] &  4 \ar[r] \ar[rrrrrr, bend right, no head, densely dotted] & 5 \ar[r] & 6 \ar[r] &  7 \ar[r] \ar[rrrr, bend right, no head, densely dotted] & 8 \ar[r] & 9 \ar[r] & 10 \ar[r] & 11
\end{tikzcd}
\end{equation*}

$\Lambda_3 = L_{10}(\Lambda_2)\colon$
\begin{equation*}
\begin{tikzcd}[column sep = 1.8em]
1 \ar[r] \ar[rrrrrrr, bend right, no head, densely dotted] & 2 \ar[r] & 3 \ar[r] &  4 \ar[r]  \ar[rrrrrr, bend right, no head, densely dotted] & 5 \ar[r] & 6 \ar[r] &  7 \ar[r] & 8 \ar[r]  \ar[rrr, bend right, no head, densely dotted] & 9 \ar[r] & 10 \ar[r] & 11
\end{tikzcd}
\end{equation*}

$\Lambda_4 = L_8(\Lambda_3)\colon$
\begin{equation*}
\begin{tikzcd}[column sep = 1.8em]
1 \ar[r] \ar[rrrrrrr, bend right, no head, densely dotted] & 2 \ar[r] \ar[rrrrrrr, bend right, no head, densely dotted] & 3 \ar[r] &  4 \ar[r] & 5 \ar[r] \ar[rrrrr, bend right, no head, densely dotted] & 6 \ar[r] & 7 \ar[r] & 8 \ar[r]  \ar[rrr, bend right, no head, densely dotted] & 9 \ar[r] & 10 \ar[r] & 11
\end{tikzcd}
\end{equation*}

$\Lambda_5 = R_2(\Lambda_4)\colon$
\begin{equation*}
\begin{tikzcd}[column sep = 1.8em]
1 \ar[r] \ar[rrrrrr, bend right, no head, densely dotted] & 2 \ar[r]  \ar[rrrrrrr, bend right, no head, densely dotted] & 3 \ar[r] &  4 \ar[r]  \ar[rrrrrr, bend right, no head, densely dotted] & 5 \ar[r] & 6 \ar[r] & 7 \ar[r]  \ar[rrrr, bend right, no head, densely dotted]& 8 \ar[r] & 9 \ar[r] & 10 \ar[r] & 11
\end{tikzcd}
\end{equation*}

$\Lambda_6 = R_7(\Lambda_5)\colon$
\begin{equation*}
\begin{tikzcd}[column sep = 1.8em]
1 \ar[r] \ar[rrrrrr, bend right, no head, densely dotted] & 2 \ar[r] \ar[rrrrrr, bend right, no head, densely dotted] & 3 \ar[r] & 4 \ar[r] \ar[rrrrr, bend right, no head, densely dotted] & 5 \ar[r] & 6 \ar[r] \ar[rrrr, bend right, no head, densely dotted] &  7 \ar[r] \ar[rrrr, bend right, no head, densely dotted] & 8 \ar[r] & 9 \ar[r] & 10 \ar[r] & 11
\end{tikzcd}
\end{equation*}

$k \mathbb A_{11}/(\rad)^5 = R_{4}(\Lambda_6)  \colon$
\begin{equation*}
\begin{tikzcd}[column sep = 1.8em]
1 \ar[r] \ar[rrrrr, bend right, no head, densely dotted] & 2 \ar[r] \ar[rrrrr, bend right, no head, densely dotted] &  3 \ar[r] \ar[rrrrr, bend right, no head, densely dotted] &  4 \ar[r] \ar[rrrrr, bend right, no head, densely dotted] &  5 \ar[r] \ar[rrrrr, bend right, no head, densely dotted] & 6 \ar[r] \ar[rrrrr, bend right, no head, densely dotted] &  7 \ar[r] &  8 \ar[r] & 9 \ar[r] & 10 \ar[r] & 11
\end{tikzcd}
\end{equation*}
\end{example}

\begin{example}\label{example:A13tworelations}
Let us use Proposition~\ref{proposition:doubleMutation} to show that the algebra $\Lambda'$ given by
\begin{equation*}
\begin{tikzcd}[column sep = small]
1 \ar[r] \ar[rrrrrrrrr, bend left, densely dotted, -] & 2 \ar[r] & 3 \ar[r] & 4 \ar[r] \ar[rrrrrrrrr, bend right, densely dotted, -]& 5 \ar[r] & 6 \ar[r] & 7 \ar[r] & 8 \ar[r] & 9 \ar[r] & 10 \ar[r] & 11 \ar[r] & 12 \ar[r] & 13,
\end{tikzcd}
\end{equation*}
which appeared in the Introduction, is derived equivalent to the algebra $\Lambda$ given by
\begin{equation*}
\begin{tikzcd}[column sep = small]
1 \ar[r] \ar[rrrrrrr, bend left, densely dotted, -]& 2 \ar[r] & 3 \ar[r] \ar[rrrrrr, bend left, densely dotted, -] & 4 \ar[r] & 5 \ar[r] \ar[rrrrrr, bend right, densely dotted, -]& 6 \ar[r] \ar[rrrrrrr, bend right, densely dotted, -] & 7 \ar[r] & 8 \ar[r] & 9 \ar[r] & 10 \ar[r] & 11 \ar[r] & 12 \ar[r] & 13.
\end{tikzcd}
\end{equation*}

Starting with the latter we apply $R_1(-)$ twice and then $L_{13}(-)$ twice, which yields the following chain of derived equivalent Nakayama algebras.

\medskip
$R_1(\Lambda)\colon$
\begin{equation*}
\begin{tikzcd}[column sep = small]
1 \ar[r] \ar[rrrrrrr, bend left, densely dotted, -] & 2 \ar[r] \ar[rrrrrrr, bend left, densely dotted, -] & 3 \ar[r] & 4 \ar[r] \ar[rrrrrrr, bend right, densely dotted, -]& 5 \ar[r] \ar[rrrrrrrr, bend right, densely dotted, -]& 6 \ar[r] & 7 \ar[r] & 8 \ar[r] & 9 \ar[r] & 10 \ar[r] & 11 \ar[r] & 12 \ar[r] & 13
\end{tikzcd}
\end{equation*}

$R_1^2(\Lambda)\colon$
\begin{equation*}
\begin{tikzcd}[column sep = small]
1 \ar[r] \ar[rrrrrrr, bend left, densely dotted, -] & 2 \ar[r] & 3 \ar[r] \ar[rrrrrrrr, bend right, densely dotted, -]& 4 \ar[r] \ar[rrrrrrrrr, bend right, densely dotted, -]& 5 \ar[r] & 6 \ar[r] & 7 \ar[r] & 8 \ar[r] & 9 \ar[r] & 10 \ar[r] & 11 \ar[r] & 12 \ar[r] & 13
\end{tikzcd}
\end{equation*}

$L_{13}(R_1^2(\Lambda))\colon$
\begin{equation*}
\begin{tikzcd}[column sep = small]
1 \ar[r] \ar[rrrrrrrr, bend left, densely dotted, -] & 2 \ar[r] & 3 \ar[r] \ar[rrrrrrrrr, bend right, densely dotted, -]& 4 \ar[r] \ar[rrrrrrrrr, bend right, densely dotted, -]& 5 \ar[r] & 6 \ar[r] & 7 \ar[r] & 8 \ar[r] & 9 \ar[r] & 10 \ar[r] & 11 \ar[r] & 12 \ar[r] & 13
\end{tikzcd}
\end{equation*}

$L_{13}^2(R_1^2(\Lambda))=\Lambda'\colon$
\begin{equation*}
\begin{tikzcd}[column sep = small]
1 \ar[r] \ar[rrrrrrrrr, bend left, densely dotted, -] & 2 \ar[r] & 3 \ar[r] & 4 \ar[r] \ar[rrrrrrrrr, bend right, densely dotted, -]& 5 \ar[r] & 6 \ar[r] & 7 \ar[r] & 8 \ar[r] & 9 \ar[r] & 10 \ar[r] & 11 \ar[r] & 12 \ar[r] & 13
\end{tikzcd}
\end{equation*}
Example~\ref{example:A13notPWH} will show that $\Lambda$, hence each of these algebras, is non-piecewise hereditary.
\end{example}

\section{Obstructions to being piecewise hereditary}

A \textbf{path} in a triangulated category is a tuple $(X_0, \ldots, X_n)$ of indecomposable objects such that $\Hom(X_{i-1},X_i)\neq0$ for each $i\in\{1,\ldots n\}$.

In the sequel we will denote such a path by $X_0 \leadsto X_n$.

\begin{remark}
What we call a path here is usually called a \textbf{strong path}. e.g.\ in \cite{MR3896230,MR2413349}.
\end{remark}

The basic observation is easy enough:
\begin{lemma}\label{lemma:pathimpliesnotpwh}
Let $\Lambda$ be an algebra. If $\Db(\Lambda)$ contains an indecomposable object $X$ with a path $X[i]\leadsto X$ for some $i\ge1$, then $\Lambda$ is not piecewise hereditary.
\end{lemma}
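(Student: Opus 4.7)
The plan is to argue by contradiction using the standard structural properties of the derived category of a hereditary abelian category. Assume $\Lambda$ is piecewise hereditary, so fix an equivalence $\Db(\Lambda) \simeq \Db(\mathcal H)$ with $\mathcal H$ hereditary. Since $\Lambda$ is actually only needed up to such an equivalence, I may work entirely inside $\Db(\mathcal H)$, where the path $X[i] \leadsto X$ still exists.

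The two inputs I will use are: (i) every indecomposable object $Y \in \Db(\mathcal H)$ is isomorphic to a stalk $H[d]$ for some indecomposable $H \in \mathcal H$ and a unique integer $d$ (this is a standard consequence of $\mathcal H$ being hereditary, which forces every complex to split as a direct sum of its cohomology stalks); and (ii) for indecomposables $H, H' \in \mathcal H$, one has
\[
\Hom_{\Db(\mathcal H)}(H[d],H'[d']) = \Ext^{d'-d}_{\mathcal H}(H,H'),
\]
which vanishes unless $d'-d \in \{0,1\}$, again because $\mathcal H$ is hereditary. With this, I can unambiguously define $d(Y) \in \mathbb Z$ for every indecomposable $Y \in \Db(\mathcal H)$ via $Y \simeq H_Y[d(Y)]$.

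Now consider an arbitrary path $Y_0 \leadsto Y_n$ of indecomposables. By (ii), each nonzero map $Y_{j-1} \to Y_j$ forces $d(Y_j) - d(Y_{j-1}) \in \{0,1\}$; in particular $d(Y_j) \geq d(Y_{j-1})$. Telescoping gives $d(Y_n) \geq d(Y_0)$. Applying this to a path from $X[i]$ to $X$, and noting $d(X[i]) = d(X) + i$, I obtain $d(X) \geq d(X) + i$, i.e.\ $i \leq 0$, contradicting the hypothesis $i \geq 1$.

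The only point that requires care, rather than real difficulty, is input (i): one must note that $X$ being indecomposable in $\Db(\Lambda)$ transports under the equivalence to an indecomposable object of $\Db(\mathcal H)$, and invoke the splitting-into-stalks statement for hereditary abelian categories to guarantee that this indecomposable is of the form $H[d]$. Once $d$ is well-defined and monotone along paths, the contradiction is immediate.
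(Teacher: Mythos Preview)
Your argument is correct and is essentially identical to the paper's own proof: both assign to each indecomposable of $\Db(\mathcal H)$ the integer $d$ with $Y \in \mathcal H[d]$, observe that a nonzero morphism forces $d$ to increase by $0$ or $1$, and conclude that no path can lower $d$. The only difference is presentational---you spell out the telescoping and the transport along the equivalence more explicitly than the paper does.
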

\begin{proof}
If $\mathcal H$ is a hereditary abelian category and $A,B\in \Db(\mathcal H)$ are indecomposable, then there are integers $a,b$ such that $A\in\mathcal H[a]$ and $B\in\mathcal H[b]$. Moreover, the existence of a non-zero morphism $A\to B$ implies $b\in\{a, a+1\}$.

In particular, for any indecomposable object $X$ in $\Db(\mathcal H)$ there can be no path $X[1]\leadsto X$, and the claim of the lemma follows.
\end{proof}

\subsection*{Coarse versus fine}

In some cases, a simple-minded approach lets us produce paths of the form $X[i]\leadsto X$ in the derived category:

\begin{example}\label{example:simpleminded}
Consider the algebra $\Lambda$ given by the following quiver with relations.
\[ \begin{tikzcd}
1 \ar[r] \ar[rrr,bend left,densely dotted,-] & 2 \ar[r] \ar[rrr,bend right,densely dotted,-]& 3 \ar[r] & 4 \ar[r] \ar[rrr,bend left,densely dotted,-] & 5 \ar[r] \ar[rrr,bend right,densely dotted,-] & 6 \ar[r] & 7 \ar[r]  \ar[rrr,bend left,densely dotted,-] & 8 \ar[r] & 9 \ar[r] & 10
\end{tikzcd} \]
Starting with $P_1$ we can produce a ``coarsest" indecomposable complex of projectives, in the sense that we make the difference between neighboring indices maximal: \[P_9\to P_6 \to P_3 \to P_1\] At the other extreme there is a ``finest" indecomposable complex, namely \[P_{10}\to P_8 \to P_7 \to P_5 \to P_4 \to P_2 \to P_1.\]
For each non-zero chain map of the form
\[\begin{tikzcd}
P_9 \ar[r] & P_6 \ar[r] & P_3 \ar[r] & P_1 & \\
P_{10} \ar[u,"\neq0"] \ar[r] & P_8 \ar[r] & P_7 \ar[r] & P_5 \ar[r] & P_4 \ar[r] & P_2 \ar[r] & P_1
\end{tikzcd}\]
the mapping cone is a complex of the form
\[
P_{10} \xto{\left(\begin{smallmatrix}\ast\\\ast\end{smallmatrix}\right)} P_9\oplus P_8 \xto{\left(\begin{smallmatrix}\ast&0\\0&\ast\end{smallmatrix}\right)} P_6\oplus P_7 \xto{\left(\begin{smallmatrix}\ast&0\\0&\ast\end{smallmatrix}\right)} P_3\oplus P_5 \xto{\left(\begin{smallmatrix}\ast&0\\0&\ast\end{smallmatrix}\right)} P_1\oplus P_4 \xto{\left(\begin{smallmatrix}0&\ast\end{smallmatrix}\right)} P_2 \stackrel{\ast}\to P_1,
\]
where each $\ast$ indicates a non-zero morphism of modules. After removing the rightmost term \( P_1 \) we obtain a complex which
\begin{enumerate}
\item admits $P_1$ as a subcomplex in degree $i$ and as a quotient in degree $i+1$; and
\item is indecomposable.
\end{enumerate}
In particular there is a path $P_1[1]\leadsto P_1$ in $\Db(\Lambda)$ as indicated by the diagram
\[\begin{tikzcd}[ampersand replacement=\&]
\&\&\&\&P_1\ar[d,"{\left(\begin{smallmatrix}1\\0\end{smallmatrix}\right)}"]\&\\
P_{10} \ar[r] \& P_9\oplus P_8 \ar[r] \& P_6\oplus P_7 \ar[r] \& P_3\oplus P_5 \ar[r] \& P_1\oplus P_4 \ar[r] \& P_2 \ar[d,"\ast"]\\
\&\&\&\&\&P_1,
\end{tikzcd}\]
so by Lemma~\ref{lemma:pathimpliesnotpwh} the algebra $\Lambda$ is not piecewise hereditary.
\end{example}

In the sequel we will represent the situation in Example~\ref{example:simpleminded} by a diagram
\begin{equation}\label{diagram:representspath}
\begin{tikzcd}
% &  &  &  & P_1 \ar[d] & \\
 & P_9 \ar[r] & P_6 \ar[r] & P_3 \ar[r] & P_1 & \\
P_{10} \ar[ur,dashed] \ar[r] & P_8 \ar[r] & P_7 \ar[r] & P_5 \ar[r] & P_4 \ar[r] & P_2.
%&  &  &  & & P_1
\end{tikzcd} \tag{$\dagger$}
\end{equation}

\begin{remark}
The naive approach from Example~\ref{example:simpleminded} does not always work:
\begin{enumerate}
\item The ``coarse" complex need not be sufficiently short compared to the ``fine" complex even if the algebra is not piecewise hereditary; and, perhaps more subtly,
\item the mapping cone is indecomposable if and only if the indices in the associated diagram (\ref{diagram:representspath}) are sufficiently intertwined. Explicitly---and informally---Example~\ref{example:simpleminded} works because the indices go from being larger in the top row ($9>8$) to being larger in the bottom row ($6<7$). We can think of this twist as somehow making the endomorphism ring of the complex small enough so that no non-zero idempotent can appear.
\end{enumerate}
\end{remark}

Let us illustrate the second point with the following non-example, before making the positive statement clear.

\begin{nonexample}
Consider $k\mathbb A_{11}/(\rad)^4$. This algebra is known to not be piecewise hereditary by \cite{MR2736030}, a fact which we will also see here in Example~\ref{example:A11/rad4}.

The coarsest and finest complexes in this case are
\[ P_{10} \to P_7 \to P_4 \to P_1 \text{ and } P_{10} \to P_9 \to P_6 \to P_5 \to P_2 \to P_1, \]
respectively. This might lead us to hope that we get a path
\[\begin{tikzcd}[ampersand replacement=\&,column sep=4em]
\&\&\&P_1\ar[d]\&\\
P_{10} \ar[r,"{\left(\begin{smallmatrix} \ast \\ \ast \end{smallmatrix}\right)}"] \& P_7\oplus P_9 \ar[r,"{\left(\begin{smallmatrix} \ast &0\\0& \ast \end{smallmatrix}\right)}"] \& P_4\oplus P_6 \ar[r,"{\left(\begin{smallmatrix} \ast &0\\0& \ast \end{smallmatrix}\right)}"] \& P_1\oplus P_5 \ar[r,"{\left(\begin{smallmatrix}0& \ast \end{smallmatrix}\right)}"] \& P_2 \ar[d]\\
\&\&\&\&P_1
\end{tikzcd}\]
in the derived category. However, here the complex in the middle is \emph{not} indecomposable.
\end{nonexample}

\begin{lemma} \label{lemma:is indec cx}
Let
\begin{align*}
 P_{c_1} \to P_{c_2} \to P_{c_3} \to \cdots \to P_{c_{\ell_c}} & \,\, and \\
 P_{f_1} \to P_{f_2} \to P_{f_3} \to \cdots \to P_{f_{\ell_f}} &
\end{align*}
be indecomposable complexes of indecomposable projectives over a Nakayama algebra, such that \( c_1 = f_1 \). Assume that
\begin{itemize}
\item \( c_2 < f_2 \) and that
\item there is some $i$ such that $ c_i > f_i $ and $c_j = f_j$ for each $ 2 < j < i$.
\end{itemize}
Then the complex
\[ P_{c_1} \xto{\left(\begin{smallmatrix}\ast\\\ast\end{smallmatrix}\right)} P_{c_2} \oplus P_{f_2}  \xto{\left(\begin{smallmatrix}\ast&0\\0&\ast\end{smallmatrix}\right)} P_{c_3} \oplus P_{f_3} \to \cdots \]
is indecomposable.
\end{lemma}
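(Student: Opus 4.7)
The plan is to show that \( \End_{\Kb(\proj \Lambda)}(X) \) is a local ring, whence \( X \) is indecomposable. I would work with chain endomorphisms \( \phi = (\phi^n) \colon X \to X \), analyzing them via the commutation relations with the differentials, and exploiting the very simple \( \Hom \)-structure of linear Nakayama algebras: \( \End(P_i) = k \), while \( \Hom(P_i, P_j) \) is either zero (when \( i < j \)) or is one-dimensional, spanned by a unique standard path (when \( i \ge j \)).

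First I would use the assumption \( c_2 < f_2 \) to observe that \( \Hom(P_{c_2}, P_{f_2}) = 0 \), forcing \( \phi^1 \) to be upper-triangular. Writing \( \phi^0 = \lambda \in k \), commutation with the initial differential \( d^0 = \binom{\ast}{\ast} \) pins the lower-right entry of \( \phi^1 \) to \( \lambda \) and ties the upper-left entry \( \alpha_2 \) to the strictly upper-triangular entry \( \beta_2 \) via \( \alpha_2 + \beta_2 = \lambda \). I would then propagate through the middle block, positions with \( c_j = f_j \): each \( \phi^{j-1} \) lies in \( M_2(k) \), the differentials are block-diagonal, and commutation forces the same upper-triangular pattern with \( \alpha_2, \beta_2 \) and \( \lambda \), while killing the \( (2,1) \)-entry. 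At position \( i \), the hypothesis \( c_i > f_i \) now requires \( \phi^{i-1} \) to be lower-triangular; comparing this with the inherited upper-triangular form across \( d^{i-2} \) collapses \( \beta_2 \) to zero. Hence \( \phi^n = \lambda \cdot \id \) for \( 0 \le n \le i - 1 \).

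With this in hand, I would define \( \rho \colon \End_{\Kb(\proj\Lambda)}(X) \to k \) by \( \rho(\phi) = \lambda \). This is well-defined modulo homotopy, since the only potentially modifying term \( h^1 d^0 \) vanishes: the map \( h^1 \colon X^1 \to X^0 = P_{c_1} \) has components in \( \Hom(P_{c_2}, P_{c_1}) \) and \( \Hom(P_{f_2}, P_{c_1}) \), both zero as \( c_2, f_2 < c_1 \). Clearly \( \rho \) is a surjective ring homomorphism, so \( \ker \rho \) is a maximal two-sided ideal in the finite-dimensional algebra \( \End_{\Kb}(X) \).

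The main obstacle is showing that \( \ker \rho \) is contained in the Jacobson radical, i.e.\ that every endomorphism in \( \ker \rho \) is nilpotent in the homotopy category. Given the vanishing of \( \phi^n \) at positions \( n \le i - 1 \), the image of \( \phi \) lies in the subcomplex \( X^{\ge i} \), and I expect that iterating \( \phi \) — combined with boundedness of \( X \) and further Hom-vanishing on later positions — pushes the support progressively to the right until a sufficiently high power becomes null-homotopic. An alternative I would consider in parallel is to exploit the short exact sequence of complexes \( 0 \to C' \to X \to F \to 0 \), where \( C' \) is the coarse complex with its degree-zero term removed and \( F \) is the fine complex, and use the resulting triangle in \( \Db(\Lambda) \) together with the indecomposability of \( C \) and \( F \) to directly rule out non-trivial summands of \( X \).
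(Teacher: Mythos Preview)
Your overall strategy matches the paper's: show that the chain‐map endomorphism ring surjects onto \(k\) with nilpotent kernel, hence is local. You correctly extract the scalar \(\lambda\) and correctly argue that the twist at position \(i\) forces \(\phi^n = \lambda\cdot\id\) for \(n \le i-1\). Where you diverge from the paper, and where your gap lies, is that you stop propagating there.

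The point you are missing is that the scalar \(\lambda\) continues to appear on the diagonal of \(\phi^n\) for \emph{every} \(n\), not just \(n \le i-1\). This is automatic: each differential \(d^{n}\) for \(n \ge 1\) is block diagonal \(\left(\begin{smallmatrix}\ast&0\\0&\ast\end{smallmatrix}\right)\) with nonzero entries, and since each \(\Hom(P_a,P_b)\) is at most one-dimensional, the commutation \(\phi^{n+1} d^n = d^n \phi^n\) forces the diagonal entries of \(\phi^{n+1}\) to equal those of \(\phi^n\). Once you know this, the ``main obstacle'' evaporates. When \(\lambda = 0\), every \(\phi^n\) has zero diagonal; at positions with \(c_{n+1} \ne f_{n+1}\) one off-diagonal entry vanishes for Hom reasons, so \((\phi^n)^2 = 0\), and at positions with \(c_{n+1} = f_{n+1}\) the fact that \(\phi^2\) again lies in the kernel forces \((\phi^n)^2\) to have zero diagonal, hence \((\phi^n)^2 = 0\) as well. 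Thus \(\phi^2 = 0\) as a chain map, and the kernel is nil.

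Two further simplifications: there is no need to pass to the homotopy category --- working with honest chain maps suffices and avoids the homotopy bookkeeping --- and the alternative triangle argument via \(0 \to C' \to X \to F \to 0\) is not needed (and would itself require nontrivial work to rule out splitting).
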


\begin{proof}

Let us first consider the case that \( c_3 > f_3 \).
We calculate the endomorphism ring of the complex. Note that indecomposable projective modules have endomorphism ring \( k \), and \( \Hom(P_a, P_b) \) vanishes unless \( a \geq b \). So by assumption an endomorphism of our complex is of the form
\[\begin{tikzcd}[ampersand replacement=\&,column sep=4em,row sep=3em]
P_{c_1} \ar[r] \ar[d,"\lambda_1"] \& P_{c_2}\oplus P_{f_2} \ar[r] \ar[d,"{\left(\begin{smallmatrix} \lambda_2 &0\\\ast& \lambda'_2 \end{smallmatrix}\right)}"] \& P_{c_3}\oplus P_{f_3} \ar[r] \ar[d,,"{\left(\begin{smallmatrix} \lambda_3 &\ast\\0& \lambda'_3 \end{smallmatrix}\right)}"]\& \cdots \\
P_{c_1} \ar[r] \& P_{c_2}\oplus P_{f_2} \ar[r] \& P_{c_3}\oplus P_{f_3} \ar[r] \& \cdots.
\end{tikzcd}\]
Commutativity of the second square forces both entries marked $\ast$ to vanish. Together with commutativity of the square on the left this is easily seen to imply that \[\lambda_1=\lambda_2=\lambda'_2=\lambda_3=\lambda'_3,\] and this common scalar will appear on each diagonal entry of each subsequent matrix in the endomorphism. If the scalar is zero then the endomorphism is nilpotent. This shows that the endomorphism ring is local, so the complex is indecomposable.

The situation with one or more equal terms in the two sequences only introduces additional matrices to the argument above that need to be equal to both their left and right neighbors. Thus the same argument shows that all diagonal entries of all matrices are equal.
\end{proof}

Incorporating this last lemma into the strategy of Example~\ref{example:simpleminded} naturally leads us to considering the following two sequences of integers.

\begin{construction} \label{const:sequences}
Let \( \Lambda \) be a Nakayama algebra. We will iteratively construct a coarse and a fine sequence, $c=(c_i)$ and $f=(f_i)$ respectively, running along the quiver of \( \Lambda \), from right to left.

Let \( c_1 = f_1 \) be the end-point of the rightmost relation of length at least \( 3 \), excluding a possible relation \( (c_1 - 3) \dashrightarrow c_1 \) where there is another relation ending in \( c_1 - 1 \), but no relation ending in \( c_1 - 2 \).

Let \( c_2 \) be the largest number between the starting-point of this relation \( + 2 \) and \( c_1 \), such that no relation ends in \( c_2 \). If no such number exists we set \( c_2 = c_1 - 1 \). Let \( f_2 = c_2 - 1 \).

Now we set
\begin{align*}
c_{i+1} & = (\text{start-point of the last relation ending before or at } c_i) + 1 \text{, and} \\
f_{i+1} & = \minimum \big\{ (\text{start-point of the last relation ending before or at } f_{i-1}), \\
& \qquad\qquad\! (\text{end-point of the last relation ending before or at } f_i) - 1 \big\}
\end{align*}
while the required relations exist. We denote by \( \ell_c \) and \( \ell_f \) the lengths of these two sequences, respectively.
\end{construction}

\begin{remark}
The sequences are constructed in such a way that
\[ P_{c_1} \to P_{c_2} \to \cdots \to P_{c_{\ell}} \to P_1 \]
and the similar diagram with \( P_{f_i} \) are indecomposable complexes.

The technical condition on \( c_2 \) ensures that the condition of Lemma~\ref{lemma:is indec cx} is met, at least provided \( c_i > f_i \) for sufficiently large \( i \).
\end{remark}

Thus, we have all the ingredients for the following result.

\begin{proposition} \label{prop.coarse_fine}
In the situation of Construction~\ref{const:sequences}, if \( \ell_f \geq \ell_c + 2 \), then \( \Lambda \) is not piecewise hereditary.
\end{proposition}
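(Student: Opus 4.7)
The plan is to parallel Example~\ref{example:simpleminded}: I would build an indecomposable complex $X'$ in $\Db(\Lambda)$ together with a path $P_1[i] \leadsto P_1$ for $i = \ell_f - \ell_c - 1 \geq 1$, and then apply Lemma~\ref{lemma:pathimpliesnotpwh}.

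First, by the Remark following Construction~\ref{const:sequences}, both
\[ C = (P_{c_1} \to \cdots \to P_{c_{\ell_c}} \to P_1) \quad\text{and}\quad F = (P_{f_1} \to \cdots \to P_{f_{\ell_f}} \to P_1) \]
are indecomposable complexes. Since $c_1 = f_1$, I would glue them along this common leftmost term: place $F$ in cohomological degrees $[-\ell_f, 0]$ and the shorter $C$ in the initial degrees $[-\ell_f, -\ell_f + \ell_c]$. This yields a complex $X$ whose every intermediate position is the direct sum of a term from $C$ and a term from $F$, with block-diagonal differentials; exceptions occur at the leftmost position, where the common term $P_{c_1} = P_{f_1}$ maps simultaneously into both its next coarse and its next fine successor, and at the rightmost positions past where $C$ has terminated, which are populated solely by the trailing terms of $F$, finishing at $X^0 = P_1$.

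Now truncate $X$ to $X'$ by removing the rightmost $P_1$ (in degree $0$). Indecomposability of $X'$ follows from an extension of Lemma~\ref{lemma:is indec cx}: a scalar $\lambda$ at the leftmost term $X'^{-\ell_f} = P_{c_1}$ propagates diagonally through every subsequent matrix, including the final single-entry matrices in the fine-only tail, forcing the endomorphism ring to be local. The inclusion of $P_1$ (the last term of $C$, sitting as a summand at degree $-\ell_f + \ell_c$) into the first summand of $X'^{-\ell_f + \ell_c}$ gives a non-zero chain map $P_1[\ell_f - \ell_c] \to X'$; the chain-map condition holds because this $P_1$ is the terminal term of the coarse block and hence has zero outgoing differential. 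Dually, the final arrow $P_{f_{\ell_f}} \to P_1$ of $F$ yields a non-zero chain map $X' \to P_1[1]$, whose chain-map condition boils down to $d^2 = 0$ in $F$.

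Composing these two maps and applying the shift $[-1]$ throughout produces a path $P_1[\ell_f - \ell_c - 1] \to X'[-1] \to P_1$ through the indecomposable object $X'[-1]$. Since $\ell_f - \ell_c - 1 \geq 1$ by hypothesis, Lemma~\ref{lemma:pathimpliesnotpwh} applies with $X = P_1$ and $i = \ell_f - \ell_c - 1$, showing that $\Lambda$ is not piecewise hereditary. The main technical obstacle is the indecomposability step: Lemma~\ref{lemma:is indec cx} is stated for two sequences of the same length, but $X'$ has an asymmetric shape where the coarse block ends strictly before the fine block, so the scalar-propagation argument must be carried through the transition into the fine-only tail. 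A secondary concern is the bookkeeping of off-diagonal entries in the differentials, which must be chosen compatibly with $d^2 = 0$ on $X'$; the inequalities hard-coded into Construction~\ref{const:sequences} are what guarantee that this is possible.
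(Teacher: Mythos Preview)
Your proposal is correct and follows essentially the same route as the paper's proof: glue the coarse and fine complexes along $P_{c_1}=P_{f_1}$, invoke Lemma~\ref{lemma:is indec cx} for indecomposability, exhibit $P_1$ as a subcomplex at one degree and as a quotient at a later one, and finish with Lemma~\ref{lemma:pathimpliesnotpwh}. The only cosmetic difference is where the fine tail is cut: the paper stops its indecomposable complex at $P_{f_{\ell_c+2}}$ and then walks through the remaining fine terms $P_{f_{\ell_c+3}},\ldots,P_1$ as separate stalk complexes (so the resulting path is $P_1[1]\leadsto P_1$), whereas you keep the entire fine tail inside $X'$ and obtain a two-step path $P_1[\ell_f-\ell_c-1]\leadsto P_1$. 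Note also that Lemma~\ref{lemma:is indec cx} is already stated for sequences of possibly different lengths $\ell_c$ and $\ell_f$, so the ``extension'' you flag as a technical obstacle is in fact covered by the lemma as written; the scalar propagates into the single-term tail exactly because consecutive fine differentials are nonzero.
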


\begin{proof}
We invoke Lemma~\ref{lemma:pathimpliesnotpwh}: It suffices to consider the path
\[\begin{tikzcd}[ampersand replacement=\&,column sep=2.5em]
\&\&\&\&P_1\ar[d]\&\\
P_{c_1} \ar[r,"{\left(\begin{smallmatrix} \ast \\ \ast \end{smallmatrix}\right)}"] \& P_{f_2} \oplus P_{c_2} \ar[r,"{\left(\begin{smallmatrix} \ast &0\\0& \ast \end{smallmatrix}\right)}"] \& P_{f_3} \oplus P_{c_3} \ar[r,"{\left(\begin{smallmatrix} \ast &0\\0& \ast \end{smallmatrix}\right)}"] \& \cdots \ar[r,"{\left(\begin{smallmatrix} \ast &0\\0& \ast \end{smallmatrix}\right)}"]  \& P_1\oplus P_{f_{\ell_c+1}} \ar[r,"{\left(\begin{smallmatrix}0& \ast \end{smallmatrix}\right)}"] \& P_{f_{\ell_c+2}} \ar[d]\\
\&\&\&\&\& \vdots \ar[d] \\
\&\&\&\&\& P_1
\end{tikzcd}\]
in $\Db(\Lambda)$ and note that the second line is indecomposable by the discussion above.
\end{proof}

\begin{examplesimplemindedrevisited}
In this case
\[ c = (10, 9, 6, 3) \text{ and } f = (10, 8, 7, 5, 4, 2). \]
\end{examplesimplemindedrevisited}

\begin{example}
Consider \( k \mathbb{A}_n / (\rad)^m \) with \( m \geq 3 \). In the parlance of Construction~\ref{const:sequences} we then have
\begin{align*}
c & = (n, n-1, n-m, n-2m+1, n-3m+2, \, \ldots) \text{ and} \\
f & = (n, n-2, n-m, n-2-m, n-2m, n-2-2m, \, \ldots),
\end{align*}
both ending when the entries become \( \leq 1 \). A straightforward calculation shows that \[\text{ $\ell_c = \left\lfloor\frac{n-3}{m-1} \right\rfloor + 2$ and $\ell_f = \maximum \left\{ 2 \left\lfloor \frac{n-4}{m} \right\rfloor + 2, \,2 \left\lfloor \frac{n-2}{m} \right\rfloor + 1 \right\}$}.\]
It follows that \( k \mathbb{A}_n / (\rad)^3 \) is not piecewise hereditary for \( n \geq 14 \), while for \( m \geq 4 \) the algebra \( k \mathbb{A}_n / (\rad)^m \) is not piecewise hereditary for \( n \geq 2m+2 \).

While these bounds are not at all tight, the argument does give an easy proof that for given \( m \) these algebras will eventually stop being piecewise hereditary.
\end{example}

\subsection*{Auslander--Reiten translation}
The bounded derived category of a Nakayama algebra has almost split triangles; $\tau X$ denotes the Auslander--Reiten translate of $X$.

\begin{lemma}\label{lemma:taupathimpliesnotpwh}
Suppose there exists an indecomposable object $X$ in $\Db(\Lambda)$ with \[\text{$\tau^n X = X[m]$ for some $n,m \ge 1$.}\] Then $\Lambda$ is not piecewise hereditary.
\end{lemma}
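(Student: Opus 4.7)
The plan is to invoke Lemma~\ref{lemma:pathimpliesnotpwh}: starting from the hypothesis $\tau^n X = X[m]$ with $n, m \geq 1$, I would produce a path $X[m] \leadsto X$ in $\Db(\Lambda)$ and conclude that $\Lambda$ is not piecewise hereditary.

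The main building block is the almost split triangle, which exists for every indecomposable $Y$ in $\Db(\Lambda)$:
\[ \tau Y \to E \to Y \to \tau Y[1]. \]
Provided the middle term $E$ is nonzero, I would pick any indecomposable summand $E_j$; the components $\tau Y \to E_j$ and $E_j \to Y$ of the source and sink maps are irreducible, and therefore in particular nonzero, so $(\tau Y, E_j, Y)$ is a path in the sense defined above, yielding $\tau Y \leadsto Y$. Applying this to $Y = \tau^{k-1} X$ for $k = 1, \ldots, n$ and concatenating the resulting paths gives $\tau^n X \leadsto X$; by hypothesis this reads $X[m] \leadsto X$ with $m \geq 1$, and Lemma~\ref{lemma:pathimpliesnotpwh} finishes the argument.

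The step I expect to require care is the degenerate case where some AR triangle along the chain has zero middle term, so that $\tau Y \cong Y[-1]$ in place of being joined to $Y$ by a path through $E$. Since $\tau$ commutes with the shift, if $E$ vanishes for the AR triangle of $\tau^{k-1} X$ at the smallest such $k$, then $E$ vanishes at every subsequent step as well, and $\tau^n X \cong \tau^{k-1} X[-(n-k+1)]$. Combined with $\tau^n X = X[m]$ this gives $\tau^{k-1} X = X[m + n - k + 1]$, an instance of the same hypothesis with the strictly smaller index $k - 1 < n$ and, by minimality of $k$, with all AR triangles in the shortened chain having nonzero middle term; the main argument from the previous paragraph then applies. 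The only remaining case is $k = 1$, which would force $X = X[m+n]$ with $m+n \geq 2$, which is impossible in the bounded derived category of a finite-dimensional algebra.
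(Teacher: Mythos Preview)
Your proof is correct and follows essentially the same strategy as the paper: use the middle term of the almost split triangle to produce a two-step path $\tau Y \leadsto Y$, iterate along the $\tau$-orbit, and invoke Lemma~\ref{lemma:pathimpliesnotpwh}. The only differences are that the paper establishes non-vanishing of the components $\tau X \to Z_i$ and $Z_i \to X$ by a direct Five-Lemma style argument rather than by citing irreducibility, and that you treat the degenerate case of a zero middle term with extra care while the paper leaves it implicit.
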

\begin{proof}
Consider the almost split triangle \[\tau X \stackrel{f}\to Z \stackrel{g}\to X \to \tau X[1], \] and write $Z=Z_1 \oplus \dots \oplus Z_k$ with each $Z_i$ indecomposable. Clearly, by assumption and Lemma~\ref{lemma:pathimpliesnotpwh} it suffices to show that there is a common $i\in\{1,\ldots,k\}$ such that the canonical components $f_i\colon \tau X \to Z_i$ and $g_i\colon Z_i \to X$ are both non-zero.

In fact, we can show that $g_i$ and $f_i$ are both non-zero for each $i$:

Let us show that each $f_i$ is non-zero. Suppose some $f_i=0$. Without loss of generality we may assume $i=1$. Now, writing $Z_{\ast}=Z_2 \oplus \dots \oplus Z_n$ we have the solid part of the following commutative diagram.
\[\begin{tikzcd}[ampersand replacement=\&]
0 \ar[d] \ar[r] \& Z_1 \ar[d,"{\left(\begin{smallmatrix}1\\0\end{smallmatrix}\right)}"] \ar[r,"1"] \& Z_1 \ar[r] \ar[d,dashed,"\zeta"] \& 0 \ar[d]\\
\tau X \ar[d] \ar[r,"f"] \& Z_1\oplus Z_{\ast} \ar[d,"{\left(\begin{smallmatrix}1&0\end{smallmatrix}\right)}"] \ar[r,"g"] \& X \ar[r] \ar[d,dashed,"\zeta'"]\& \tau X[1] \ar[d]\\
0 \ar[r] \& Z_1 \ar[r,"1"] \& Z_1 \ar[r] \& 0
\end{tikzcd}\]
This may be completed to morphisms of triangles as indicated by the dashed arrows, and by the Five Lemma the composition $\zeta'\circ\zeta$ is invertible. In particular this means that $\zeta=g_1$ is split mono. But then, since $Z_1$ and $X$ are indecomposable, $g_1$ must be invertible. It follows that $g$ is split epi and we have a contradiction.

A dual argument shows that each $g_i$ is non-zero.
\end{proof}

\begin{example}\label{example:A11/rad5}
Consider $k \mathbb A_{11}/(\rad)^5$. Successive application of $\tau$ to $P_1$ in the derived category yields:
\[ \begin{tikzcd}[row sep=0]
\textcolor{gray}{\text{degree}} & & \textcolor{gray}{-4} & \textcolor{gray}{-3} & \textcolor{gray}{-2} & \textcolor{gray}{-1} & \textcolor{gray}{0} & \textcolor{gray}{1} & \\
P_1\colon &  &  &  &  & & P_1 & & \\
\tau(P_1)\colon &  &  & P_{11} \ar[r] & P_7 \ar[r] & P_6 \ar[r] & P_2 \ar[r] & P_1 & \\
\tau^2(P_1)\colon &  &  &  & P_8 &  & &  & \\
\tau^3(P_1)\colon &  &  &  &  & P_4 & &  & \\
\tau^4(P_1)\colon &  & P_{11} \ar[r] & P_{10} \ar[r] & P_6 \ar[r] & P_5 \ar[r] & P_1 &  & \\
\tau^5(P_1)\colon &  &  & P_{11} &  &  & &  & \\
\tau^6(P_1)\colon &  &  &  & P_7 & & &  & \\
\tau^7(P_1)\colon &  &  &  &  & P_3 & &  & \\
\tau^8(P_1)\colon &  & P_{11} \ar[r] & P_9 \ar[r] & P_6 \ar[r] & P_4 \ar[r] & P_1 &  & \\
\tau^9(P_1)\colon &  &  & P_{10} &  &  & &  & \\
\tau^{10}(P_1)\colon &  &  &  & P_6 & & &  & \\
\tau^{11}(P_1)\colon &  &  &  &  & P_2 & &  & \\
\tau^{12}(P_1)\colon &  & P_{11} \ar[r] & P_8 \ar[r] & P_6 \ar[r] & P_3 \ar[r] & P_1 &  & \\
\tau^{13}(P_1)\colon &  &  & P_9 &  &  & &  & \\
\tau^{14}(P_1)\colon &  &  &  & P_5 & & &  & \\
\tau^{15}(P_1)\colon &  &  &  &  & P_1 & &  &
\end{tikzcd} \]
Since $\tau^{15}(P_1)= P_1[1]$, Lemma~\ref{lemma:taupathimpliesnotpwh} reveals that this algebra is not piecewise hereditary.
\end{example}

\begin{example}\label{example:A11/rad4}
(Re)consider the algebra $k\mathbb A_{11}/(\rad)^4$. Also here $\tau^{15}(P_1)=P_1[1]$ in the derived category, so this algebra is not piecewise hereditary by Lemma~\ref{lemma:taupathimpliesnotpwh}.
\end{example}

In some cases when $\tau^mX=X[n]$ for some indecomposable $X$, it is fairly easy to picture a path $X[\ge1]\leadsto X$:

\begin{exampleA11/rad5revisited} The table describing the $\tau^i(P_1)$ in the derived category of $k\mathbb A_{11}/(\rad)^5$ suggests that we should consider the following (\ref{diagram:representspath})-like diagram.
\begin{equation}\label{equation:nicepath}
\begin{tikzcd}
%&  &  &  & P_1 \ar[d,"="] &  & \\
&  & P_9 \ar[r] & P_5 \ar[r] & P_1 &  & \\
& P_{11} \ar[ur,dashed] \ar[r] & P_8 \ar[r] & P_6 \ar[r] & P_3 \ar[r] & P_1 & \\
&  & P_{10} \ar[r] & P_6 \ar[r] & P_2 \ar[ur,dashed] &  & \\
& P_{11} \ar[ur,dashed] \ar[r] & P_9 \ar[r] & P_6 \ar[r] & P_4 \ar[r] & P_1 & \\
&  & & P_7 \ar[r] & P_3 \ar[ur,dashed] &  & \\
&  & P_{10} \ar[ur,dashed] \ar[r] & P_6 \ar[r] & P_5 \ar[r] & P_1 & \\
&  & & P_8 \ar[r] & P_4 \ar[ur,dashed] &  & \\
& & P_{11} \ar[ur,dashed] \ar[r] & P_7 \ar[r] & P_6 \ar[r] & P_2 %\ar[d] & \\
%&  &  &  &  & P_1 &
\end{tikzcd} \tag{$\dagger\dagger$}
\end{equation}
Notice that
\begin{itemize}
\item $P_1$ appears as a subcomplex of the top row in degree $i$ and as a quotient of the complex in the bottom row in degree $i+1$, and that
\item for each pair of neighboring rows, the indices are sufficiently intertwined so that the corresponding mapping cone will be an indecomposable complex.
\end{itemize}
In particular the above represents a path $P_1[1]\leadsto P_1$ in $\Db(k\mathbb A_{11}/(\rad)^5)$.
\end{exampleA11/rad5revisited}

On the basis of (\ref{equation:nicepath}) one might start hoping for general existence criteria for paths of the form $X[\ge1]\leadsto X$ i.e.,\ not involving $\tau$: Maybe there is a more sophisticated version of the ``coarse versus fine" construction?

Intriguingly, however, the transparency exhibited in the case of $k\mathbb A_{11}/(\rad)^5$ seems to be rather exceptional. In particular, the process of slow-but-steady consolidation which worked like magic in Example~\ref{example:A11/rad5} does not lend itself to immediate generalization---at least not in the most naive of ways---and so any real conceptual understanding of the whole situation escapes us still.

\begin{example}\label{example:A11tworelations} Let $\Lambda$ be the algebra given by
\[ \begin{tikzcd}[column sep=2em]
1 \ar[r] & 2 \ar[r] \ar[rrrrrr,bend left,densely dotted,-] & 3 \ar[r] & 4 \ar[r] \ar[rrrrrr,bend right,densely dotted,-] & 5 \ar[r] & 6 \ar[r] & 7 \ar[r] & 8 \ar[r] & 9 \ar[r] & 10 \ar[r] & 11.
\end{tikzcd} \]
Then $\tau^4(P_6)=P_6[1]$ in $\Db(\Lambda)$, so $\Lambda$ is not piecewise hereditary by Lemma~\ref{lemma:taupathimpliesnotpwh}. But here the situation is somehow less lucid (compare to Example~\ref{example:A11/rad5}):
\[ \begin{tikzcd}[row sep=0]
\textcolor{gray}{\text{degree}} & & \textcolor{gray}{-2} & \textcolor{gray}{-1} & \textcolor{gray}{0} & \textcolor{gray}{1} \\
P_6\colon &  &  &  &  P_6 & & \\
\tau(P_6)\colon &    & & P_8 \ar[r] & P_7 \ar[r] & P_1 & \\
\tau^2(P_6)\colon &    & P_{10} \ar[r] & P_9 \ar[r] & P_3 \ar[r] & P_2 & \\
\tau^3(P_6)\colon &    & P_{11} \ar[r] & P_5 \ar[r] & P_4 & & \\
\tau^4(P_6)\colon &    & & P_6 & & & \\
\end{tikzcd} \]
From this table one can certainly produce a (\ref{equation:nicepath})-like picture, but we do contend that the associated path $P_6[1]\leadsto P_6$, although relatively short, is less conspicuous.
\end{example}

Perhaps not so surprisingly, the complexes appearing can be much easier or harder to interpret for different representatives of the same derived equivalence class. Let us illustrate this behavior.

\begin{example}\label{example:equivToA11_5}
In Example~\ref{example:mutationToA11_5} we showed that the algebra $\Lambda$ given by the following quiver with relations is derived equivalent to $k \mathbb A_{11}/(\rad)^5$.
\[ \begin{tikzcd}[column sep=2em]
1 \ar[r] \ar[rrrr,bend left,densely dotted,-] & 2 \ar[r] \ar[rrrrrr,bend left,densely dotted,-] & 3 \ar[r] & 4 \ar[r] & 5 \ar[r] \ar[rrrrrr,bend left,densely dotted,-] & 6 \ar[r] & 7 \ar[r] & 8 \ar[r] & 9 \ar[r] & 10 \ar[r] & 11
\end{tikzcd} \]
Here we also have $\tau^{15}(P_1)=P_1[1]$ in the derived category of $\Lambda$, but the explicit calculation of Auslander--Reiten translations is much more painful than in Example~\ref{example:A11/rad5}), and writing down a nice indecomposable complex like in \eqref{equation:nicepath} does not seem straightforward.
Indeed, in $\Db(\Lambda)$ we have the following objects.

\[ \begin{tikzcd}[row sep=0,column sep=2em]
\textcolor{gray}{\text{degree}} &[-25pt] &[-10pt] \textcolor{gray}{-4} &[-10pt] \textcolor{gray}{-3} & \textcolor{gray}{-2} & \textcolor{gray}{-1} & \textcolor{gray}{0} &[-10pt] \textcolor{gray}{1} & \\
P_1\colon &  &  &  &  & & P_1 & & \\
\tau(P_1)\colon &  & & P_{11} \ar[r] & P_8 \ar[r] & P_5 \ar[r] & P_2 \ar[r] & P_1 & \\
\tau^2(P_1)\colon &  & & & P_9 &  &  & & \\
\tau^3(P_1)\colon &  & & P_{11} \ar[r] & P_{10} \ar[r] & P_3 & & & \\
\tau^4(P_1)\colon &  & & & P_5\oplus P_6 \ar[r] & P_3\oplus P_4 \ar[r] & P_1 && \\
\tau^5(P_1)\colon &  & & P_8\oplus P_{11} \ar[r] & P_6\oplus P_7 \ar[r] & P_2\oplus P_4 \ar[r] & P_1 && \\
\tau^6(P_1)\colon &  & & P_9 \ar[r] & P_6\oplus P_7 \ar[r] & P_2 \ar[r] & P_1 && \\
\tau^7(P_1)\colon &  & & P_{10} \ar[r] & P_7 \ar[r] & P_2 & && \\
\tau^8(P_1)\colon &  & & & P_5 \ar[r] & P_2 \ar[r] & P_1&& \\
\tau^9(P_1)\colon &  & & P_{11} \ar[r] & P_6 \ar[r] & P_3 & && \\
\tau^{10}(P_1)\colon &  & & P_8 \ar[r] & P_5\oplus P_6 \oplus P_7 \ar[r] & P_2 \oplus P_4 \ar[r] & P_1&& \\
\tau^{11}(P_1)\colon &  & & P_8\oplus P_9 \ar[r] & P_6 \oplus P_7 \ar[r] & P_2 \oplus P_2 \ar[r] & P_1&& \\
\tau^{12}(P_1)\colon &  & P_{11} \ar[r] & P_9\oplus P_{10} \ar[r] & P_5 \oplus P_7 \ar[r] & P_2 \ar[r] & P_1&& \\
\tau^{13}(P_1)\colon &  &  & P_{10} \ar[r] & P_3 \ar[r] & P_2 & && \\
\tau^{14}(P_1)\colon &  &  &  & P_4 &  & && \\
\tau^{15}(P_1)\colon &  &  &  & & P_1 & &&
\end{tikzcd} \]
\end{example}

Now we employ Lemma~\ref{lemma:taupathimpliesnotpwh} to gather more examples of Nakayama algebras which are not piecewise hereditary, the first of which will serve as a base case in the sequel.

\begin{example}\label{example:A9} Let $\Lambda$ be given by
\[ \begin{tikzcd}
1 \ar[r] \ar[rrr,bend left,densely dotted,-] & 2 \ar[r] & 3 \ar[r] \ar[rrr,bend right,densely dotted,-] & 4 \ar[r] \ar[rrr,bend left,densely dotted,-] & 5 \ar[r] & 6 \ar[r] \ar[rrr,bend right,densely dotted,-] & 7 \ar[r] & 8 \ar[r] & 9.
\end{tikzcd} \]
Then $\tau^4(P_2)=P_2[1]$ in $\Db(\Lambda)$, so $\Lambda$ is not piecewise hereditary.
\end{example}

\begin{example}\label{example:A10single} Let $\Lambda$ be given by
\[ \begin{tikzcd}
1 \ar[r] \ar[rrrr,bend left,densely dotted,-] & 2 \ar[r] \ar[rrrrr,bend right,densely dotted,-] & 3 \ar[r] & 4 \ar[r] \ar[rrrrr,bend left,densely dotted,-] & 5 \ar[r] & 6 \ar[r] \ar[rrrr,bend right,densely dotted,-] & 7 \ar[r] & 8 \ar[r] & 9 \ar[r] & 10.
\end{tikzcd} \]
Then $\tau^7(P_3)=P_3[2]$ in $\Db(\Lambda)$, so $\Lambda$ is not piecewise hereditary.
\end{example}

\begin{example}\label{example:A10double} Let $\Lambda$ be
\[ \begin{tikzcd}
1 \ar[r] \ar[rrr,bend left,densely dotted,-] & 2 \ar[r] \ar[rrrr,bend right,densely dotted,-] & 3 \ar[r] \ar[rrrrr,bend left,densely dotted,-] & 4 \ar[r] & 5 \ar[r] \ar[rrrr,bend right,densely dotted,-] & 6 \ar[r] & 7 \ar[r] \ar[rrr,bend left,densely dotted,-] & 8 \ar[r] & 9 \ar[r] & 10
\end{tikzcd} \]
and let $\Lambda'$ be given by
\[ \begin{tikzcd}
1 \ar[r] \ar[rrrrr,bend left,densely dotted,-] & 2 \ar[r] & 3 \ar[r] \ar[rrrrr,bend right,densely dotted,-] & 4 \ar[r] & 5 \ar[r] \ar[rrrrr,bend left,densely dotted,-] & 6 \ar[r] & 7 \ar[r] & 8 \ar[r] & 9 \ar[r] & 10.
\end{tikzcd} \]
Then $\tau^9(P_2)=P_2[1]$ holds true both in $\Db(\Lambda)$ and in $\Db(\Lambda')$, so $\Lambda$ and $\Lambda'$ are both non-piecewise hereditary.
\end{example}

\begin{remark}\label{remark:Coxeter}
The non-piecewise hereditary algebras $\Lambda$ and $\Lambda'$ from Example~\ref{example:A10double} both have the same Coxeter polynomial as the piecewise hereditary $k\mathbb A_{10}/(\rad)^9$, namely $T^{10}+T^9+T+1$.
\end{remark}

Finally, we gather the cases on which Proposition~\ref{proposition:HS} will be built:

\begin{liste}\label{liste:baseforHS}
The following five algebras are not piecewise hereditary.
\begin{enumerate}
\item\label{HSlist:A11/4} $k\mathbb A_{11}/(\rad)^4$
\item\label{HSlist:A11/5} $k\mathbb A_{11}/(\rad)^5$
\item\label{HSlist:A12/3} $k\mathbb A_{12}/(\rad)^3$
\item\label{HSlist:A12/6} $k\mathbb A_{12}/(\rad)^6$
\item\label{HSlist:A12/7} $k\mathbb A_{12}/(\rad)^7$
\end{enumerate}
Note that we have already seen (\ref{HSlist:A11/4}) and (\ref{HSlist:A11/5}) in Example~\ref{example:A11/rad4} and Example~\ref{example:A11/rad5}, respectively. To argue why the algebras (\ref{HSlist:A12/3}), (\ref{HSlist:A12/6}) and (\ref{HSlist:A12/7}) are not piecewise hereditary, we assert that $\tau^{21}(P_1)=P_1[1]$ holds true in each of the respective derived categories.
\end{liste}

\section{Extensions}

The following starting point of our discussion here is well-known.

\begin{proposition}
Let \( \Lambda \) be a finite dimensional algebra with \( e \in \Lambda \) an idempotent. If \( \Lambda \) is piecewise hereditary, then so is \( e \Lambda e \).
\end{proposition}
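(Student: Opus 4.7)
The plan is to invoke the Happel--Zacharia characterization: a finite dimensional algebra $A$ is piecewise hereditary if and only if its strong global dimension is finite, where the latter is the supremum, over all indecomposable objects $X \in \Kb(\proj A)$, of the difference between the largest and smallest degrees in which $X$ has a nonzero projective summand. The overall strategy is then to exhibit $\Kb(\proj(e\Lambda e))$ as a full triangulated subcategory of $\Kb(\proj \Lambda)$ in a length-preserving manner.

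To set up the embedding, I first note that $\Lambda e$ is a direct summand of $\Lambda$ as a left $\Lambda$-module, hence projective, and that $\End_\Lambda(\Lambda e)^{\op} \cong e\Lambda e$. The functor $\Hom_\Lambda(\Lambda e, -)$ therefore induces an equivalence of additive categories $\add(\Lambda e) \simeq \proj(e\Lambda e)$. Passing to bounded homotopy categories yields a triangle equivalence $\Kb(\add(\Lambda e)) \simeq \Kb(\proj(e\Lambda e))$, and composing its inverse with the fully faithful inclusion $\Kb(\add(\Lambda e)) \hookrightarrow \Kb(\proj \Lambda)$ produces a fully faithful triangle functor
\[ \Phi \colon \Kb(\proj(e\Lambda e)) \hookrightarrow \Kb(\proj \Lambda). \]
By construction, $\Phi$ preserves indecomposability (being fully faithful and additive) and does not alter the degrees at which a complex has nonzero components.

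It follows that the length of any indecomposable complex in $\Kb(\proj(e\Lambda e))$ equals the length of its image in $\Kb(\proj \Lambda)$, so the strong global dimension of $e\Lambda e$ is at most that of $\Lambda$. Since $\Lambda$ is piecewise hereditary, the latter is finite by Happel--Zacharia, hence so is the former, and applying the same characterization in reverse gives that $e\Lambda e$ is piecewise hereditary. The main obstacle is essentially a matter of locating and citing the correct references; the argument itself is purely formal, provided one is willing to quote Happel--Zacharia, and the key bookkeeping point is simply that the identification of $\add(\Lambda e)$-summands with $\proj(e\Lambda e)$ respects the degree-wise structure of complexes.
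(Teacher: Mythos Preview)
Your argument is correct and takes a genuinely different route from the paper's proof. The paper uses essentially the same fully faithful embedding (written as $-\otimes^{\mathbb L}_{e\Lambda e} e\Lambda$), but then invokes Chen--Ringel's characterization of hereditary triangulated categories: since $\Db(\Lambda)$ contains no path $X[1]\leadsto X$, neither does its full triangulated subcategory $\Db(e\Lambda e)$, and this is precisely what Chen--Ringel say is equivalent to being the bounded derived category of a hereditary abelian category. You instead appeal to Happel--Zacharia and bound the strong global dimension directly via the degree-preserving nature of the embedding; this is arguably cleaner, since it sidesteps the paper's preliminary step of checking that $e\Lambda e$ is triangular and hence of finite global dimension.

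One caveat worth flagging: the Happel--Zacharia theorem, in its original form, is proved over an algebraically closed field, and the paper explicitly emphasizes (see the end of the introduction) that it wishes to avoid that hypothesis. The Chen--Ringel criterion is purely triangulated-categorical and applies over any base field, which is why the authors chose it. Your proof is fine as written if one is content to work over $\bar k$, or if you are prepared to cite an extension of Happel--Zacharia to arbitrary fields; otherwise this is a point where your approach and the paper's aims diverge.
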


\begin{proof}
First note that \( \Lambda \) being piecewise hereditary implies that \( \Lambda \), and thus also \( e \Lambda e \), is triangular. In particular both algebras have finite global dimension.

Now the functor \( - \otimes_{e \Lambda e}^{\mathbb{L}} e \Lambda \colon \Db(e \Lambda e) \to \Db(\Lambda)\) is fully faithful, and the claim follows immediately from Chen--Ringel's characterization of hereditary triangulated categories \cite{MR3896230}.
\end{proof}

Let us apply this result to Nakayama algebras.

\begin{corollary}\label{corollary:removevertex}
Assume \( \Lambda \) is a Nakayama algebra which is piecewise hereditary, and let \( i \) be a vertex in the quiver of \( \Lambda \). Let \( \Gamma \) be the Nakayama algebra obtained from \( \Lambda \) by removing the vertex \( i \), adding a new composite arrow if there is both an arrow into and out of \( i \), as well as extended relations for relations ending or starting in \( i \).

Then \( \Gamma \) is also piecewise hereditary.
\end{corollary}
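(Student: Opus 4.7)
The strategy is to apply the preceding proposition to the idempotent $e = 1 - e_i \in \Lambda$, where $e_i$ denotes the primitive idempotent at vertex $i$. The proposition then immediately yields that $e\Lambda e$ is piecewise hereditary, so the entire task reduces to identifying $e\Lambda e$ with the Nakayama algebra $\Gamma$ described in the statement.

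For the quiver, the simple modules of $e\Lambda e$ are in bijection with the remaining vertices $\{1,\ldots,n\}\setminus\{i\}$. Between two such vertices $j, j'$ one has $\Hom_{e\Lambda e}(P_j,P_{j'}) \cong e_{j'}\Lambda e_j$, which in a linear Nakayama algebra is at most one-dimensional and is non-zero precisely when there is a non-zero path from $j$ to $j'$ in $\Lambda$. A minimal generating set of arrows therefore consists of the original arrows between pairs of consecutive remaining vertices, together with a single new arrow $i+1 \to i-1$ whenever $i$ is internal and the length-two composite through $i$ is non-zero in $\Lambda$. This matches precisely the quiver of $\Gamma$.

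For the relations, one performs a case analysis on a minimal relation $r$ of $\Lambda$, given by a path from some $s$ down to some $t$ which vanishes but whose proper subpaths do not. If neither endpoint of $r$ equals $i$, the same path --- now viewed through the new arrow of $e\Lambda e$ whenever it passes through $i$ --- gives a minimal relation in $e\Lambda e$ with unchanged endpoints. If $r$ ends at $i$, extending it by the arrow $i \to i-1$ produces the minimal relation of $e\Lambda e$ that ends at $i-1$, and the case where $r$ starts at $i$ is handled dually. This is exactly the \emph{extended relations} prescription in the statement, giving $e\Lambda e \cong \Gamma$ and hence piecewise heredity of $\Gamma$.

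The main obstacle lies in the case analysis for relations with an endpoint at $i$, together with boundary situations such as when the length-two composite through $i$ is itself already zero in $\Lambda$ (in which case no new composite arrow appears, and the datum reappears as a length-two relation of $\Gamma$); these checks are routine but need careful bookkeeping to ensure minimality and matching of lengths.
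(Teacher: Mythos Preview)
Your approach is correct and is precisely what the paper does: it states the corollary immediately after the proposition with only the remark ``Let us apply this result to Nakayama algebras,'' so the entire content is the identification $\Gamma \cong (1-e_i)\Lambda(1-e_i)$, which you spell out in more detail than the paper itself. One small slip: in the paper's conventions arrows point from lower to higher indices, so the new composite arrow should be $i-1 \to i+1$ (and extended relations go to $i+1$ or from $i-1$), but this does not affect the argument.
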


Let us depict the construction of the corollary:

\[ \begin{tikzcd}[column sep=1.5em, row sep=0]
\Lambda: &[-5pt]&[+10pt]&[+10pt]&[-5pt]&[-10pt] &[-10pt]\Gamma:&[-5pt]&[10pt]&[-5pt]\\
\cdots \ar[r] \ar[rr,bend right,densely dotted,-] & i-1 \ar[r] & i \ar[r] \ar[rr,bend left,densely dotted,-] & i+1 \ar[r] & \cdots & \leadsto & \cdots \ar[r] \ar[rr,bend right,densely dotted,-] &  i-1 \ar[r] \ar[rr,bend left,densely dotted,-] & i+1 \ar[r] & \cdots
\end{tikzcd}\]

The corollary becomes a lot more flexible when being read ``backwards'':

\begin{corollary}\label{corollary:introducevertex}
Assume \( \Lambda \) is a Nakayama algebra which is not piecewise hereditary. Let \( \Gamma \) be a Nakayama algebra obtained from \( \Lambda \) by introducing an extra vertex \( \star \) between \( i \) and \( i+1 \), with the following possibilities for relations.
\begin{itemize}
\item If there is a relation in \( \Lambda \) ending in \( i+1 \), then we can either introduce a corresponding relation ending in \( \star \) in \( \Gamma \), or keep the relation as is. In the latter case we may introduce a relation starting anywhere before the start of the given relation, ending in \( \star \).
\item If there is a relation in \( \Lambda \) starting in \( i \), then we can either introduce a corresponding relation starting in \( \star \) in \( \Gamma \), or keep the relation as is. In the latter case we may introduce a relation starting in \( \star \) and ending anywhere after the end of the given relation.
\end{itemize}

Then $\Gamma$ is also not piecewise hereditary.
\end{corollary}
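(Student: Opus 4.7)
The plan is to recognise that Corollary~\ref{corollary:introducevertex} is the contrapositive of Corollary~\ref{corollary:removevertex}, once we identify the Nakayama algebra obtained by removing the inserted vertex $\star$ from $\Gamma$ with $\Lambda$ itself. Accordingly, I would assume for contradiction that $\Gamma$ is piecewise hereditary, apply Corollary~\ref{corollary:removevertex} at the vertex $\star$, and then check by a direct combinatorial comparison that the resulting algebra is indeed $\Lambda$. This gives the required contradiction.

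The underlying combinatorial check is local to the pair $(i,i+1)$, since relations of $\Lambda$ not touching this pair are unaffected by both the insertion and the removal. Removing $\star$ and composing the two arrows $i\to\star$ and $\star\to i+1$ into a single arrow $i\to i+1$ restores the quiver of $\Lambda$, so the real work is at the level of relations. I would go through the two cases of the statement separately. In option~(a), a relation of $\Lambda$ ending in $i+1$ is replaced in $\Gamma$ by a corresponding relation ending in $\star$; the ``extended relations'' step of Corollary~\ref{corollary:removevertex} then prolongs it by one arrow to again end in $i+1$, recovering the original. In option~(b), the relation of $\Lambda$ is kept as-is in $\Gamma$ but now passes through $\star$; after removing $\star$ and composing arrows, both its endpoints and its length revert to those it had in $\Lambda$. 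The analysis for a relation of $\Lambda$ starting in $i$ is entirely dual, and relations crossing $i$ and $i+1$ strictly from both sides are transported through the insertion and removal without change.

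The one situation needing a bit of care is the optional extra relation permitted in option~(b): one starting strictly before the start of the original relation and ending in $\star$, or dually one starting in $\star$ and ending strictly after the end of the original relation. After the extension step, such an additional relation becomes a relation with the same endpoint as the corresponding relation coming from $\Lambda$, but containing the latter as a strict subpath; it is therefore a consequence of the original and may be discarded from a minimal set of relations. Hence the algebra obtained by removing $\star$ has exactly the minimal relations of $\Lambda$, and the two coincide as Nakayama algebras. The hard part of the argument is not conceptual but purely bookkeeping: ensuring that the extension procedure of Corollary~\ref{corollary:removevertex} correctly inverts each of the allowed insertion modifications and that no spurious new minimal relation is introduced along the way.
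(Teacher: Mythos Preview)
Your proposal is correct and matches the paper's approach exactly: the paper does not give a separate proof of this corollary, but introduces it with the sentence ``The corollary becomes a lot more flexible when being read `backwards','' i.e.\ it is precisely the contrapositive of Corollary~\ref{corollary:removevertex}. Your write-up spells out the bookkeeping that the paper leaves implicit --- in particular, the observation that the optional extra relation introduced in option~(b) becomes redundant after removing $\star$ because it properly contains the surviving relation from $\Lambda$ --- and this is the only point requiring any care.
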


Before looking at examples, let us briefly record the simplified version of this result for the case that we add a vertex at the start or end of the quiver.

\begin{corollary} \label{corollary:introducevertexatend}
Suppose \( \Lambda \) is a Nakayama algebra which is not piecewise hereditary. Let \( \Gamma \) be a Nakayama algebra obtained from \( \Lambda \) by introducing an extra vertex \( \star \) either before \( 1 \) or after \( n \), together with the relations from \( \Lambda \) and any choice of a relation or no relation involving \( \star \).

Then \( \Gamma \) is also not piecewise hereditary.
\end{corollary}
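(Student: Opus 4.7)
The plan is to obtain this as a direct contrapositive application of Corollary~\ref{corollary:removevertex} to the vertex $\star$ in $\Gamma$. The key observation is that $\star$ sits at one end of the linear quiver, so exactly one of the conditions ``there is an arrow into $\star$'' and ``there is an arrow out of $\star$'' holds. Consequently, when $\star$ is deleted via Corollary~\ref{corollary:removevertex}, no composite arrow needs to be inserted; and any relation of $\Gamma$ involving $\star$---by construction there is at most one, and it must either begin or end at $\star$---has no vertex beyond $\star$ through which it could be ``extended'', so it simply vanishes.

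Writing $e$ for the idempotent of $\Gamma$ corresponding to the vertices of $\Lambda$, the upshot is that $e\Gamma e$ has exactly the vertices, arrows and relations of $\Lambda$; in other words $e\Gamma e = \Lambda$. The proof then proceeds by contradiction: if $\Gamma$ were piecewise hereditary, Corollary~\ref{corollary:removevertex} would force $e\Gamma e = \Lambda$ to be piecewise hereditary as well, contradicting the hypothesis. Hence $\Gamma$ is not piecewise hereditary.

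The only real point requiring a moment's thought is to verify the boundary behaviour of the ``extended relations'' clause in Corollary~\ref{corollary:removevertex}: when the removed vertex lies at an end of the quiver, that extension is vacuous, which is precisely the simplification that makes the present statement cleaner than the interior version of Corollary~\ref{corollary:introducevertex}. I do not expect any genuine obstacle beyond this bookkeeping.
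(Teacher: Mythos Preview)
Your proposal is correct and matches the paper's approach: the paper states this corollary without proof, presenting it as the simplified special case of Corollary~\ref{corollary:introducevertex} (which is itself just the contrapositive reading of Corollary~\ref{corollary:removevertex}). Your direct appeal to Corollary~\ref{corollary:removevertex}, together with the observation that at an end vertex the ``composite arrow'' and ``extended relation'' clauses are vacuous so that removing \( \star \) from \( \Gamma \) recovers \( \Lambda \) exactly, is precisely the intended argument.
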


Now let us look at some examples and applications of Corollary~\ref{corollary:introducevertex}.

\begin{example}
We saw in Example~\ref{example:A9} that the algebra given by the following quiver with relations is not piecewise hereditary.
\[ \begin{tikzcd}
1 \ar[r] \ar[rrr,bend left,densely dotted ,-] & 2 \ar[r] & 3 \ar[r] \ar[rrr,bend left ,densely dotted,-] & 4 \ar[r] \ar[rrr,bend left ,densely dotted,-] & 5 \ar[r] & 6 \ar[r] \ar[rrr,bend left ,densely dotted,-] & 7 \ar[r] & 8 \ar[r] & 9.
\end{tikzcd} \]

Now Corollary~\ref{corollary:introducevertex} tells us that the following Nakayama algebras (and their opposites) are all non-piecewise hereditary.
\[ \begin{tikzcd}
1 \ar[r] & \star \ar[r] \ar[rrr,bend left,densely dotted ,-] & 2 \ar[r] & 3 \ar[r] \ar[rrr,bend left,densely dotted ,-] & 4 \ar[r] \ar[rrr,bend left,densely dotted ,-] & 5 \ar[r] & 6 \ar[r] \ar[rrr,bend left,densely dotted ,-] & 7 \ar[r] & 8 \ar[r] & 9
\end{tikzcd} \]
\[ \begin{tikzcd}
1 \ar[r] \ar[rrrr,bend left,densely dotted ,-] & \star \ar[r] & 2 \ar[r] & 3 \ar[r] \ar[rrr,bend left,densely dotted ,-] & 4 \ar[r] \ar[rrr,bend left,densely dotted ,-] & 5 \ar[r] & 6 \ar[r] \ar[rrr,bend left,densely dotted ,-] & 7 \ar[r] & 8 \ar[r] & 9
\end{tikzcd} \]
\[ \begin{tikzcd}
1 \ar[r]\ar[rrrr,bend left,densely dotted ,-] & \star \ar[r] \ar[rrrr,bend left,densely dotted ,-] & 2 \ar[r] & 3 \ar[r] \ar[rrr,bend left,densely dotted ,-] & 4 \ar[r] \ar[rrr,bend left,densely dotted ,-] & 5 \ar[r] & 6 \ar[r] \ar[rrr,bend left,densely dotted ,-] & 7 \ar[r] & 8 \ar[r] & 9
\end{tikzcd} \]
\[ \begin{tikzcd}
1 \ar[r] \ar[rrr,bend left,densely dotted ,-] & 2 \ar[r] & 3 \ar[r] & \star \ar[r] \ar[rrr,bend left,densely dotted ,-] & 4 \ar[r] \ar[rrr,bend left,densely dotted ,-] & 5 \ar[r] & 6 \ar[r] \ar[rrr,bend left,densely dotted ,-] & 7 \ar[r] & 8 \ar[r] & 9
\end{tikzcd} \]
\[ \begin{tikzcd}
1 \ar[r] \ar[rrrr,bend left,densely dotted ,-] & 2 \ar[r] & 3 \ar[r] & \star \ar[r] \ar[rrr,bend left,densely dotted ,-] & 4 \ar[r] \ar[rrr,bend left,densely dotted ,-] & 5 \ar[r] & 6 \ar[r] \ar[rrr,bend left,densely dotted ,-] & 7 \ar[r] & 8 \ar[r] & 9
\end{tikzcd} \]
\[ \begin{tikzcd}
1 \ar[r] \ar[rrr,bend left,densely dotted ,-]& 2 \ar[r] & 3 \ar[r] \ar[rrrr,bend left,densely dotted ,-] & \star \ar[r] & 4 \ar[r] \ar[rrr,bend left,densely dotted ,-] & 5 \ar[r] & 6 \ar[r] \ar[rrr,bend left,densely dotted ,-] & 7 \ar[r] & 8 \ar[r] & 9
\end{tikzcd} \]
\[ \begin{tikzcd}
1 \ar[r] \ar[rrrr,bend left,densely dotted ,-]& 2 \ar[r] & 3 \ar[r] \ar[rrrr,bend left,densely dotted ,-] & \star \ar[r] & 4 \ar[r] \ar[rrr,bend left,densely dotted ,-] & 5 \ar[r] & 6 \ar[r] \ar[rrr,bend left,densely dotted ,-] & 7 \ar[r] & 8 \ar[r] & 9
\end{tikzcd} \]
\[ \begin{tikzcd}
1 \ar[r] \ar[rrr,bend left,densely dotted ,-] & 2 \ar[r] & 3 \ar[r] \ar[rrrr,bend left,densely dotted ,-]& 4 \ar[r] & \star \ar[r] \ar[rrr,bend left,densely dotted ,-] & 5 \ar[r] & 6 \ar[r] \ar[rrr,bend left,densely dotted ,-] & 7 \ar[r] & 8 \ar[r] & 9
\end{tikzcd} \]
\[ \begin{tikzcd}
1 \ar[r] \ar[rrr,bend left,densely dotted ,-] & 2 \ar[r] & 3 \ar[r] \ar[rrrr,bend left,densely dotted ,-]& 4 \ar[r] \ar[rrrr,bend left,densely dotted ,-]& \star \ar[r]  & 5 \ar[r] & 6 \ar[r] \ar[rrr,bend left,densely dotted ,-] & 7 \ar[r] & 8 \ar[r] & 9
\end{tikzcd} \]
\[ \begin{tikzcd}
1 \ar[r] \ar[rrr,bend left,densely dotted ,-] & 2 \ar[r] & 3 \ar[r] \ar[rrrr,bend left,densely dotted ,-]& 4 \ar[r] \ar[rrrr,bend left,densely dotted ,-]& \star \ar[r] \ar[rrrr,bend left,densely dotted ,-] & 5 \ar[r] & 6 \ar[r] \ar[rrr,bend left,densely dotted ,-] & 7 \ar[r] & 8 \ar[r] & 9
\end{tikzcd} \]

\end{example}

\begin{example}\label{example:A13notPWH}
We can extend the algebra from Example~\ref{example:A11tworelations} by introducing a vertex between $5$ and $6$ with a relation from it to $11$, and a vertex between $6$ and $7$ with a relation to it from $1$. The resulting algebra is
\begin{equation*}
\begin{tikzcd}[column sep = small]
1 \ar[r] \ar[rrrrrrr, bend left, densely dotted, -]& 2 \ar[r] & 3 \ar[r] \ar[rrrrrr, bend left, densely dotted, -] & 4 \ar[r] & 5 \ar[r] \ar[rrrrrr, bend right, densely dotted, -]& \star \ar[r] \ar[rrrrrrr, bend right, densely dotted, -] & 6 \ar[r] & \star' \ar[r] & 7 \ar[r] & 8 \ar[r] & 9 \ar[r] & 10 \ar[r] & 11,
\end{tikzcd}
\end{equation*}
which is still not piecewise hereditary by Corollary~\ref{corollary:introducevertex}. This algebra is isomorphic to $\Lambda$ from Example~\ref{example:A13tworelations}, which as we saw there is derived equivalent to $\Lambda'$ given by
\begin{equation*}
\begin{tikzcd}[column sep = small]
1 \ar[r] \ar[rrrrrrrrr, bend left, densely dotted, -] & 2 \ar[r] & 3 \ar[r] & 4 \ar[r] \ar[rrrrrrrrr, bend right, densely dotted, -]& 5 \ar[r] & 6 \ar[r] & 7 \ar[r] & 8 \ar[r] & 9 \ar[r] & 10 \ar[r] & 11 \ar[r] & 12 \ar[r] & 13.
\end{tikzcd}
\end{equation*}
Hence, $\Lambda'$ is not piecewise hereditary.
\end{example}

\subsection*{Application: Radical powers}
Here we recover the part of Table $1$ in \cite{MR2736030} that consists of algebras which are not piecewise hereditary. In the notation of that paper, $\Lambda(n,r)$ is the Nakayama algebra $k\mathbb{A}_n/(\rad)^r$.

\begin{proposition}[\!\!\text{\cite[Proposition~2.6]{MR2736030}}]\label{proposition:HS}
The following algebras are not piecewise hereditary.
\begin{itemize}
\item $\Lambda(n+i, n-5)$ for $n\geq 12$ and $i\geq 0$.
\item $\Lambda(n,3)$ for $n\geq 12$.
\item $\Lambda(n,4)$ for $n\geq 11$.
\item $\Lambda(n,5)$ for $n\geq 11$.
\item $\Lambda(n,6)$ for $n\geq 12$.
\end{itemize}
\end{proposition}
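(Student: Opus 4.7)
My plan would be to reduce everything to the five base cases established in Liste~\ref{liste:baseforHS} by iterating Corollary~\ref{corollary:introducevertex}; the point is that this corollary lets us both step $n$ up by one with $r$ fixed, and simultaneously step $r$ up by one with a controlled change in $n$, which is precisely what is needed to cover the five families listed.

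For the four families with fixed $r \in \{3,4,5,6\}$ I would simply induct on $n$, using the bases $\Lambda(12,3)$, $\Lambda(11,4)$, $\Lambda(11,5)$ and $\Lambda(12,6)$ from Liste~\ref{liste:baseforHS}. The inductive step $\Lambda(n,r) \to \Lambda(n+1,r)$ follows from Corollary~\ref{corollary:introducevertexatend}: insert a new vertex $\star$ after $n$ and choose the one optional new relation to be the length-$r$ relation from vertex $n-r+1$ to $\star$. After relabeling $\star$ as $n+1$, the resulting Nakayama algebra has exactly the relations $i \to i+r$ for $i = 1, \ldots, n-r+1$, which is $\Lambda(n+1,r)$.

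For the family $\Lambda(n+i, n-5)$ with $n \ge 12$ and $i \ge 0$ I would set $r := n-5 \ge 7$ and $N := n+i \ge r+5$, and show that $\Lambda(N,r)$ is not piecewise hereditary for all such $r$ and $N$. Since the end-extension argument above gives $\Lambda(N+1, r)$ from $\Lambda(N, r)$, it suffices to treat the diagonal base $N = r+5$, so I would induct on $r$ starting from $\Lambda(12,7)$ from Liste~\ref{liste:baseforHS}. The step $\Lambda(r+5, r) \to \Lambda(r+6, r+1)$ would come from Corollary~\ref{corollary:introducevertex}, applied with insertion between vertices $5$ and $6$: for $r \ge 7$ no relation of $\Lambda(r+5, r)$ ends at $6$ (endpoints lie in $\{r+1, \ldots, r+5\}$), while the unique relation starting at $5$ is $5 \to r+5$, on which I would take option~(b) (keep its starting vertex, add no supplementary relation at $\star$).

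The one step that needs some care is verifying that the output of this insertion is indeed $\Lambda(r+6, r+1)$. The four relations $i \to i+r$ with $i \in \{1,2,3,4\}$ strictly pass through the new edge, so their lengths automatically increase to $r+1$, while option~(b) on $5 \to r+5$ turns it into $5 \to r+6$, again of length $r+1$; after relabeling the new vertex $\star$ as $6$ and the old vertices $6, \ldots, r+5$ as $7, \ldots, r+6$, the five resulting relations are exactly $i \to i + (r+1)$ for $i = 1, \ldots, 5$, i.e.\ those of $\Lambda(r+6, r+1)$. I do not expect this to be the main difficulty, but it is the one place where several simultaneous effects of a single vertex insertion must be tracked at once.
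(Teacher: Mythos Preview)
Your proposal is correct and follows essentially the same strategy as the paper's proof: both reduce to the five base cases of Liste~\ref{liste:baseforHS}, cover the diagonal $\Lambda(r+5,r)$ for $r\ge 7$ by inserting a single ``central'' vertex via Corollary~\ref{corollary:introducevertex}, and then fill in the rows by an end-extension via Corollary~\ref{corollary:introducevertexatend}. The only differences are cosmetic --- the paper inserts the central vertex between $6$ and $7$ (so no relation starts or ends there and no case analysis is needed) and extends at the left end rather than the right --- while your choice of inserting between $5$ and $6$ forces the small bookkeeping with the relation starting at $5$ that you correctly carry out.
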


\begin{remark}
The following proof is different from that of \cite{MR2736030}. In particular, here we neither assume that $k$ is algebraically closed nor appeal to a classification of hereditary categories.
\end{remark}

\begin{proof}
The proposition claims precisely that the marked entries indicated by the following table, are all non-piecewise hereditary.
\[\begin{tabular}{l*{7}{c}}
\diagbox[width=2em, height=2em]{\tiny{$r$}}{\tiny{$n$}} &11&12&13&14&15&16&$\cdots$\\
$3$ &&\tiny{$\otimes$}&\tiny{$\times$}&\tiny{$\times$}&\tiny{$\times$}&\tiny{$\times$}&\tiny{$\cdots$}\\
$4$&\tiny{$\otimes$}&\tiny{$\times$}&\tiny{$\times$}&\tiny{$\times$}&\tiny{$\times$}&\tiny{$\times$}&\tiny{$\cdots$}\\
$5$&\tiny{$\otimes$}&\tiny{$\times$}&\tiny{$\times$}&\tiny{$\times$}&\tiny{$\times$}&\tiny{$\times$}&\tiny{$\cdots$}\\
$6$&&\tiny{$\otimes$}&\tiny{$\times$}&\tiny{$\times$}&\tiny{$\times$}&\tiny{$\times$}&\tiny{$\cdots$}\\
$7$&&\tiny{$\otimes$}&\tiny{$\times$}&\tiny{$\times$}&\tiny{$\times$}&\tiny{$\times$}&\tiny{$\cdots$}\\
$8$&&&\tiny{$\times$}&\tiny{$\times$}&\tiny{$\times$}&\tiny{$\times$}&\tiny{$\cdots$}\\
$9$&&&&\tiny{$\times$}&\tiny{$\times$}&\tiny{$\times$}&\tiny{$\cdots$}\\
$\vdots$ &&&&&\tiny{$\ddots$}&\tiny{$\ddots$}&\tiny{$\ddots$}\\
\end{tabular}\]
Here the five algebras marked with $\otimes$ are known to be not piecewise hereditary from List~\ref{liste:baseforHS}. We will complete the table inductively by appealing to Corollary~\ref{corollary:introducevertex}.

First, from the algebra $\Lambda(12,7)$, namely
\[ \begin{tikzcd}[column sep=1.5em]
1 \ar[r] \ar[rrrrrrr,bend left,densely dotted,-] & 2 \ar[r] \ar[rrrrrrr,bend left,densely dotted,-] & 3 \ar[r] \ar[rrrrrrr,bend left,densely dotted,-] & 4 \ar[r] \ar[rrrrrrr,bend left,densely dotted,-] & 5 \ar[r] \ar[rrrrrrr,bend left,densely dotted,-] & 6 \ar[r] & 7 \ar[r] & 8 \ar[r] & 9 \ar[r] & 10 \ar[r] & 11 \ar[r] & 12,
\end{tikzcd}\]
one obtains $\Lambda(13,8)$ by introducing a ``central" vertex:
\[ \begin{tikzcd}[column sep=1.35em]
1 \ar[r] \ar[rrrrrrrr,bend left,densely dotted,-] & 2 \ar[r] \ar[rrrrrrrr,bend left,densely dotted,-] & 3 \ar[r] \ar[rrrrrrrr,bend left,densely dotted,-] & 4 \ar[r] \ar[rrrrrrrr,bend left,densely dotted,-] & 5 \ar[r] \ar[rrrrrrrr,bend left,densely dotted,-] & 6 \ar[r] & \star \ar[r] & 7 \ar[r] & 8 \ar[r] & 9 \ar[r] & 10 \ar[r] & 11 \ar[r] & 12
\end{tikzcd}\]
By Corollary~\ref{corollary:introducevertex} the latter algebra is not piecewise hereditary. This procedure may be iterated, clearly, and it follows that each algebra on the lowest diagonal of the table is not piecewise hereditary.

Now the table can be completed by a horizontal argument: The algebra $\Lambda(n+1,r)$ is obtained from $\Lambda(n,r)$ by adding a vertex and a relation of length $r$ at the ``start" of the quiver. For instance, in this way $\Lambda(11,5)$, namely
\[ \begin{tikzcd}[column sep=1.7em]
1 \ar[r] \ar[rrrrr,bend left,densely dotted,-] & 2 \ar[r] \ar[rrrrr,bend left,densely dotted,-] & 3 \ar[r] \ar[rrrrr,bend left,densely dotted,-] & 4 \ar[r] \ar[rrrrr,bend left,densely dotted,-] & 5 \ar[r] \ar[rrrrr,bend left,densely dotted,-] & 6 \ar[r] \ar[rrrrr,bend left,densely dotted,-] & 7 \ar[r] & 8 \ar[r] & 9 \ar[r] & 10 \ar[r] & 11,
\end{tikzcd}\]
gives rise to $\Lambda(12,5)$:
\[ \begin{tikzcd}[column sep=1.5em]
\star \ar[r] \ar[rrrrr,bend left,densely dotted,-] & 1 \ar[r] \ar[rrrrr,bend left,densely dotted,-] & 2 \ar[r] \ar[rrrrr,bend left,densely dotted,-] & 3 \ar[r] \ar[rrrrr,bend left,densely dotted,-] & 4 \ar[r] \ar[rrrrr,bend left,densely dotted,-] & 5 \ar[r] \ar[rrrrr,bend left,densely dotted,-] & 6 \ar[r] \ar[rrrrr,bend left,densely dotted,-] & 7 \ar[r] & 8 \ar[r] & 9 \ar[r] & 10 \ar[r] & 11
\end{tikzcd}\]
By Corollary~\ref{corollary:introducevertexatend} the assertion that $\Lambda(n,r)$ is not piecewise hereditary implies that $\Lambda(n+1,r)$ is also not piecewise hereditary, which completes the table.
\end{proof}

\section{Families of non-piecewise hereditary algebras}

In this last section we provide two simple criteria for Nakayama algebras to be non-piecewise hereditary. The cases covered here make up a large number of such algebras, but are far from a complete list.

In spirit, the propositions below say that if a Nakayama algebra contains sufficiently overlapping relations then it is not piecewise hereditary. As such they give a partial converse to \cite[Theorem 4.2]{Fosse2023}, which states that if no pair of relations overlap by more than one arrow then the algebra \emph{is} piecewise hereditary (by virtue of being derived equivalent to the path algebra of a tree).

\begin{proposition}\label{proposition:A13tworelations}
Let \( \Lambda \) be a Nakayama algebra with relations $\alpha$ and $\beta$ that overlap by at least six arrows ($\alpha$ and $\beta$ do not have to be consecutive). Let $\alpha$ be the leftmost of the two, and assume additionally that
\begin{itemize}
\item there are at least two vertices between the start of $\alpha$ and the start of $\beta$, and at least two vertices between the end of $\alpha$ and the end of $\beta$, and
\item there is no relation going from the first or second vertex before the start of $\beta$ to the first or second vertex after the end of $\alpha$.
\end{itemize}

Then $\Lambda$ is not piecewise hereditary.

\end{proposition}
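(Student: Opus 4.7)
The plan is to reduce the general case to the archetypal configuration of Example~\ref{example:A13notPWH} via repeated application of Corollary~\ref{corollary:introducevertex} and Corollary~\ref{corollary:introducevertexatend}. That archetypal algebra --- $k\mathbb{A}_{13}$ with exactly the two relations $1\to 10$ and $4\to 13$ and nothing else --- is the smallest instance satisfying the hypothesis (overlap of six arrows, exactly two vertices between the starts, exactly two between the ends, no other relations), and it is already known to be non-piecewise hereditary.

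Given a general $\Lambda$ satisfying the hypothesis, I would exhibit it as the result of a finite sequence of vertex insertions starting from the archetypal algebra, splitting the construction into two stages. In the \emph{rescaling} stage, enlarge the ambient quiver to get the correct number of vertices in each of the five segments (before $s$, $(s,s')$, $(s',t)$, $(t,t')$, and after $t'$) of $\Lambda$. Insertions outside $[s,t']$ are handled by Corollary~\ref{corollary:introducevertexatend}; those in the three middle segments by Corollary~\ref{corollary:introducevertex}, with $\alpha$ or $\beta$ playing the role of the ``existing relation'' required by the corollary's precondition, while $\alpha$ and $\beta$ themselves are kept as is so their lengths grow with each insertion. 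No new relations are introduced at this stage. In the \emph{extra relations} stage, for each additional relation $r$ of $\Lambda$, locate an insertion point where $r$ acquires an endpoint at the newly-inserted vertex while some already-placed relation offers the necessary adjacent start or end for Corollary~\ref{corollary:introducevertex}, and add $r$ via that corollary's optional extra-relation clause.

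The hypothesis that no relation runs from $\{s'-1, s'-2\}$ to $\{t+1, t+2\}$ is exactly what is needed to rule out the only configuration where a Stage 2 insertion would be impossible: such a relation would lack both a left-hand host (only $\beta$ starts as far left as $s'$, not at $s'-1$ or $s'-2$) and a right-hand host (only $\alpha$ ends as far right as $t$, not at $t+1$ or $t+2$), so no legitimate insertion could produce it. The main obstacle will be the combinatorial bookkeeping in Stage 2: the order in which extra relations are introduced must ensure that at each moment a suitable neighbouring relation is already in place. I would handle this by inducting on the number of extra relations, always introducing next one whose endpoint is adjacent to that of a previously-placed relation, and using the forbidden-rectangle hypothesis to guarantee such a choice can be made.
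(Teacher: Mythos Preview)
Your overall strategy --- build an arbitrary $\Lambda$ satisfying the hypothesis from the $\mathbb{A}_{13}$ archetype via Corollaries~\ref{corollary:introducevertex} and~\ref{corollary:introducevertexatend} --- is exactly the paper's approach, and your identification of why the ``forbidden rectangle'' hypothesis is needed is correct. The problem is the two-stage decomposition.

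Corollary~\ref{corollary:introducevertex} only lets you attach a new relation \emph{at the moment you insert a vertex}; there is no operation that adds a relation to an existing quiver without simultaneously creating a new vertex. So if Stage~1 already produces ``the correct number of vertices in each of the five segments'', then Stage~2 has no legal moves left: every insertion you perform there will leave you with too many vertices. The rescaling and the extra-relation steps therefore cannot be separated; each extra relation has to be introduced during the very insertion that creates one of its endpoints.

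The paper handles this by choosing a specific \emph{order} of insertion rather than splitting into stages. Writing the quiver as five blocks $A\,|\,B\,|\,C\,|\,D\,|\,E$ (with $B$ and $D$ the two fixed vertices on each side of the overlap), one inserts the $C$-vertices freely, the $E$-vertices from left to right, and the $A$-vertices from right to left. With this ordering, every new $A$-vertex has $\alpha$ starting at its immediate predecessor, and every new $E$-vertex has $\beta$ ending at its immediate successor, so the host relation required by Corollary~\ref{corollary:introducevertex} is always $\alpha$ or $\beta$ itself. This makes your proposed induction on extra relations (``always introduce next one whose endpoint is adjacent to a previously-placed relation'') unnecessary: the host is guaranteed in advance, and the forbidden-rectangle hypothesis is used exactly once, to see that relations from $B$ must land in $E$ (so are handled while building $E$) and relations into $D$ must come from $A$ (so are handled while building $A$).
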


\begin{proof}
We will prove the proposition by showing that any such algebra can be constructed by attaching --- by way of Corollaries~\ref{corollary:introducevertex} and \ref{corollary:introducevertexatend} --- some set of vertices and relations to an algebra we already know is non-piecewise hereditary. We start with $\Lambda'$ from Example~\ref{example:A13notPWH}:
\begin{equation*}
\begin{tikzcd}[column sep = small]
1 \ar[r] \ar[rrrrrrrrr, bend left, densely dotted, -] & 2 \ar[r] & 3 \ar[r] & 4 \ar[r] \ar[rrrrrrrrr, bend right, densely dotted, -]& 5 \ar[r] & 6 \ar[r] & 7 \ar[r] & 8 \ar[r] & 9 \ar[r] & 10 \ar[r] & 11 \ar[r] & 12 \ar[r] & 13
\end{tikzcd}
\end{equation*}

By Corollary~\ref{corollary:introducevertexatend} it suffices to consider the case when $\alpha$ begins at the start of the quiver, and $\beta$ ends at the end of the quiver. In this case the quiver with relations for \( \Lambda \) has the form
\begin{equation*}
\begin{tikzcd}[column sep = .9em, execute at end picture={
     \node[dashellipse=(\tikzcdmatrixname-1-2)(\tikzcdmatrixname-1-3)(\tikzcdmatrixname-1-4), label=below:{$A$}]{};
     \node[dashellipse=(\tikzcdmatrixname-1-5)(\tikzcdmatrixname-1-6), label=below:{$B$}]{};
     \node[dashellipse=(\tikzcdmatrixname-1-7)(\tikzcdmatrixname-1-8)(\tikzcdmatrixname-1-9), label=above:{$C$}]{};
     \node[dashellipse=(\tikzcdmatrixname-1-10)(\tikzcdmatrixname-1-11), label=above:{$D$}]{};
     \node[dashellipse=(\tikzcdmatrixname-1-12)(\tikzcdmatrixname-1-13)(\tikzcdmatrixname-1-14), label=above:{$E$}]{};
}]
\bullet \ar[r] \ar[rrrrrrrr, densely dotted, bend left, -, "\alpha"] & \bullet \ar[r] & {} \cdots \ar[r] & \bullet \ar[r] & \bullet \ar[r] & \bullet \ar[r] & \bullet \ar[r] \ar[rrrrrrrr, densely dotted, bend right, -,swap,"\beta"] & \cdots \ar[r] & \bullet \ar[r] & \bullet \ar[r] & \bullet \ar[r] & \bullet \ar[r] & \cdots \ar[r] & \bullet \ar[r] & \bullet
\end{tikzcd}
\end{equation*}
with the following properties.
\begin{enumerate}[label=$\Alph*$:]
\item Contains zero or more vertices, each of which may have a relation to a vertex in $D\cup E$.
\item Contains exactly two vertices, each of which may have a relation to a vertex in $E$.
\item Contains at least seven vertices. There is the relation \( \alpha \) from the first vertex of the algebra to the last vertex of $C$, and the relation \( \beta \) from the first vertex of $C$ to the last vertex of the algebra, but no other relations starting or ending in $C$.
\item Contains exactly two vertices, each of which may be hit by a relation from a vertex in $A$.
\item Contains zero or more vertices, each of which may be hit by a relation from a vertex in $A \cup B$.
\end{enumerate}

We can obtain this picture from the previous one by introducing new vertices as in Corollary~\ref{corollary:introducevertex}:

We can introduce additional vertices in \( C \) without problem, since they do not involve any new relations.

For the vertices in \( A \) we proceed from right to left. At any point, the new vertex introduced will have the relation \( \alpha \) starting at its immediate predecessor, so by Corollary~\ref{corollary:introducevertex} we may introduce a relation starting in this new vertex. Dually, we introduce the vertices in \( E \) iteratively from left to right, such that each new vertex is the immediate predecessor of the end of \( \beta \) at the time of its introduction.
\end{proof}

\begin{example}
Let $\Lambda$ be given by
\[ \begin{tikzcd}[column sep = 0.7em]
1 \ar[r] \ar[rrrrrrrrrr,bend left,densely dotted,-,"\alpha"] & 2 \ar[r] \ar[rrrrrrrrrrr,bend right,densely dotted,-] & 3 \ar[r] & 4 \ar[r] & 5 \ar[r] \ar[rrrrrrrrr,bend left,densely dotted,-,"\beta"] & 6 \ar[r] & 7 \ar[r] & 8 \ar[r] & 9 \ar[r] \ar[rrrrrr, bend right, densely dotted, -] & 10 \ar[r] & 11 \ar[r] & 12 \ar[r] \ar[rrrr,bend left,densely dotted,-] & 13 \ar[r] & 14 \ar[r] & 15 \ar[r] & 16.
\end{tikzcd} \]
Here the relations $\alpha$ and $\beta$ satisfy the conditions in Proposition~\ref{proposition:A13tworelations}, which implies that $\Lambda$ is not piecewise hereditary.
\end{example}

\bigskip

By expanding on Example~\ref{example:A13notPWH} we got Proposition~\ref{proposition:A13tworelations}. In a similar vein, also the quiver with relations from Example~\ref{example:A9} will yield some general patterns that represent non-piecewise hereditary Nakayama algebras. The criteria obtained in this way may be summarized as follows.

\begin{proposition}\label{proposition:A9extended}
Let \( \Lambda \) be a Nakayama algebra whose ordinary quiver has at least 9 vertices, and relations $\alpha$ and $\beta$ that overlap by at least two arrows ($\alpha$ and $\beta$ need not be consecutive). Let $\alpha$ be the leftmost of the two, and assume additionally:
\begin{enumerate}
\item\label{item:prop1} There \textbf{is no} relation starting in the vertex directly before the first vertex of $\alpha$ and ending at or before the vertex directly after the first vertex of $\beta$.
\item\label{item:prop2} There \textbf{is no} relation ending in the vertex directly after the last vertex of $\beta$ and starting at or after the vertex directly before the last vertex of $\alpha$.
\item\label{item:prop3} There \textbf{is} a relation of length $\ge3$ which lies entirely before $\beta$ (possibly sharing a vertex).
\item\label{item:prop4} There \textbf{is} a relation of length $\ge3$ which lies entirely after $\alpha$ (possibly sharing a vertex).
\end{enumerate}

Then \( \Lambda \) is not piecewise hereditary.
\end{proposition}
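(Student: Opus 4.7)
The plan is to replay the strategy of Proposition~\ref{proposition:A13tworelations} with Example~\ref{example:A9} as the base case: show that any Nakayama algebra $\Lambda$ satisfying hypotheses (1)--(4) can be obtained from the nine-vertex algebra of Example~\ref{example:A9} by a sequence of single-vertex insertions via Corollary~\ref{corollary:introducevertex} and Corollary~\ref{corollary:introducevertexatend}, each of which preserves the property of failing to be piecewise hereditary.

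First I would use Corollary~\ref{corollary:introducevertexatend} to reduce to the case where the relation $\gamma$ provided by hypothesis~(3) starts at vertex $1$ and the relation $\delta$ provided by hypothesis~(4) ends at the last vertex; any vertices lying strictly before $\gamma$ or strictly after $\delta$ can be reintroduced at the very end. I would then describe the quiver of $\Lambda$ as a concatenation of five blocks determined by the endpoints of $\alpha$ and $\beta$: a block $A$ containing the vertices up to (but not including) the start of $\alpha$, a block $B$ between the start of $\alpha$ and the start of $\beta$, the overlap block $C$, a block $D$ between the end of $\alpha$ and the end of $\beta$, and a block $E$ of vertices beyond the end of $\beta$. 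The algebra of Example~\ref{example:A9} corresponds to the minimal configuration of this kind, namely $|A|=2$, $|B|=1$, $|C|=3$, $|D|=1$, $|E|=2$, in which the only relations are $\gamma,\alpha,\beta,\delta$.

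The construction then builds $\Lambda$ from this skeleton by inserting the additional vertices one at a time, in the order: $C$ first (each insertion simply lengthens $\alpha$ and $\beta$ in their interiors and introduces no new relation), then $B$ from right to left and $D$ from left to right (each step either shifts or preserves $\alpha$ or $\beta$, and the ``keep the existing relation and introduce a longer one'' clause of Corollary~\ref{corollary:introducevertex} provides room for any auxiliary relation of $\Lambda$ whose endpoint lies at the new vertex), and finally $A$ from right to left and $E$ from left to right (extending $\gamma$ and $\delta$ and accommodating any remaining auxiliary relations). The length-$\geq 3$ requirement in hypotheses~(3) and~(4) is precisely what guarantees that $\gamma$ and $\delta$ are available as the ``kept'' relations needed at the insertions in $A$ and $E$.

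The main obstacle is the bookkeeping of auxiliary relations that cross block boundaries: one must exhibit, for each such relation of $\Lambda$, a concrete insertion step at which the ``longer relation'' clause of Corollary~\ref{corollary:introducevertex} produces it. Hypotheses~(1) and~(2) are calibrated precisely to forbid the only relations incompatible with this scheme, namely those so short that no kept relation at the appropriate step is strictly shorter. I expect the case analysis of relations bridging $A\cup B$ to $D\cup E$ to be routine but tedious, and it is likely the most delicate portion to present cleanly in a finished write-up.
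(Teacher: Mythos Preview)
Your approach is essentially the paper's: start from the nine-vertex algebra of Example~\ref{example:A9} and build up to $\Lambda$ by single-vertex insertions via Corollaries~\ref{corollary:introducevertex} and~\ref{corollary:introducevertexatend}, organised by the same block decomposition and in the same order (overlap first, then the flanks of $\alpha$ and $\beta$, then the outer regions carrying $\gamma$ and $\delta$).

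There is one small gap. You claim that hypotheses~(1) and~(2) forbid ``the only relations incompatible with this scheme'', but length-two relations anywhere in $\Lambda$ are also incompatible: the ``keep and add a longer one'' clause of Corollary~\ref{corollary:introducevertex} never manufactures a relation of length two, and the ``move the endpoint to $\star$'' clause preserves length, so a length-two relation not already present in the base example cannot be created by your insertions. The paper deals with this by first invoking Corollary~\ref{corollary:lengthtworelations} to strip all length-two relations, which preserves the derived equivalence class and leaves hypotheses~(1)--(4) intact; add that reduction before starting your construction and the argument goes through.
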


\begin{proof}
By Corollary~\ref{corollary:lengthtworelations} we may assume that there are no relations of length two, and by Corollary~\ref{corollary:introducevertexatend} we may assume that there is a single relation each as in (3) and (4), and that these relations start at the very start of the quiver and end at the very end of the quiver, respectively.

Recall from Example~\ref{example:A9} that the algebra defined by
\begin{equation}\label{eq:exA9againagain}
\begin{tikzcd}
1 \ar[r] \ar[rrr,bend left,densely dotted,-] & 2 \ar[r] & 3 \ar[r] \ar[rrr,bend right,densely dotted,-,swap,"\alpha"] & 4 \ar[r] \ar[rrr,bend left,densely dotted,-,"\beta"] & 5 \ar[r] & 6 \ar[r] \ar[rrr,bend right,densely dotted,-] & 7 \ar[r] & 8 \ar[r] & 9
\end{tikzcd}\tag{$\ddagger$}
\end{equation}
is not piecewise hereditary. Our strategy now is to repeatedly apply Corollary~\ref{corollary:introducevertex}; it follows that any quiver with relations obtained by driving (\ref{eq:exA9againagain}) through the following procedure, represents a non-piecewise hereditary Nakayama algebra.

Firstly we can add a vertex between \( 4 \) and \( 5 \), optionally along with a relation starting in the new vertex and ending after the last vertex of \( \beta \). Likewise, we may add a vertex between \( 5 \) and \( 6 \), with a relation starting before the first vertex of \( \alpha \) and ending in the new vertex.

Secondly, adding a vertex between \( 1 \) and \( 2 \) increases the length of the leftmost relation. Note that this new vertex is allowed to have a relation starting in it (which will then end at any vertex between the start of \( \beta \) and the end of \( \alpha \)). This construction can be done repeatedly, increasing the length of the leftmost relation even more. Symmetrically, we can increase the length of the rightmost relation.

Thirdly we can introduce a new vertex between \( 3 \) and \( 4 \), making this new vertex the end-point of the leftmost relation as well as the start-point of \( \alpha \). Effectively this moves the leftmost relation one step left, and iterating this procedure we can move it as far left as we like relative to the start of \( \alpha \). Symmetrically, we can move the rightmost relation as far to the right as we like, compared to the end of \( \beta \) (extending the quiver as necessary).

Finally we can again introduce additional vertices between \( 3 \) and \( 4 \), this time without making these new vertices the start-points of  \( \alpha \), thus creating an arbitrary (positive) distance between the starts of \( \alpha \) and \( \beta \). When also introducing symmetric vertices between \( 6 \) and \( 7 \) we are additionally allowed to introduce relations between these two sets of newly created vertices.

It now suffices to observe that these four steps allow us to create any quiver with relations satisfying the assumptions of the proposition, which additionally do not contain relations of length two or unnecessary parts at the ends as in the first sentence of this proof.
\end{proof}

\begin{corollary}\label{corollary:NonOverlappingRelSets}
Let $\Lambda$ be a Nakayama algebra. Suppose that the ordinary quiver of $\Lambda$ contains three non-empty sets of mutually non-overlapping relations, where the middle set contains a pair of relations whose intersection is at least two arrows.

Then $\Lambda$ is not piecewise hereditary.
\end{corollary}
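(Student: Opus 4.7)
The plan is to deduce this corollary as a direct application of Proposition~\ref{proposition:A9extended}: the three-set hypothesis is designed to supply precisely the data the proposition demands --- an overlapping pair of intersection $\geq 2$ in the middle, flanked by relations of length at least $3$ on either side --- while the mutual non-overlapping rules out the ``bridging'' relations forbidden by hypotheses (1) and (2) of that proposition.

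First, by Corollary~\ref{corollary:lengthtworelations} I may assume every relation of $\Lambda$ has length at least $3$. Let $S_1, S_2, S_3$ denote the three non-empty sets. Since $\Lambda$ is linear, mutual non-overlapping forces every relation of $S_1$ to lie entirely to the left of every relation of $S_2$ (possibly sharing a vertex but no arrows), and dually $S_3$ lies entirely to the right. Thus any relation drawn from $S_1$ (resp.\ $S_3$) lies entirely before $\beta$ (resp.\ after $\alpha$) for any choice $\alpha,\beta \in S_2$, supplying hypotheses (3) and (4) of Proposition~\ref{proposition:A9extended}. For the overlapping pair, I would take $\alpha \in S_2$ to be the leftmost relation participating in a pair of intersection $\geq 2$ in $S_2$, and dually $\beta$ the rightmost. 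If these two do not themselves overlap by at least two arrows I would iterate the shift within $S_2$, replacing $\alpha$ by a suitable nearby relation producing a pair of overlap $\geq 2$ until a single extremal pair $(\alpha,\beta)$ is obtained; this terminates since $\Lambda$ has finitely many vertices.

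To verify hypothesis (1), suppose a relation $r$ starts at $s-1$ (one vertex before the start of $\alpha$) and ends at most one vertex past the start of $\beta$. Because $r$ has length at least $3$, it shares at least two consecutive arrows with $\alpha$ beginning at $s \to s+1$, so $r$ overlaps $\alpha$; by mutual non-overlapping $r \notin S_1 \cup S_3$, so $r \in S_2$, and then $(r,\alpha)$ is an overlapping pair of intersection $\geq 2$ strictly to the left of $\alpha$, contradicting the extremal choice. Hypothesis (2) follows by a symmetric argument, and Proposition~\ref{proposition:A9extended} then delivers the conclusion that $\Lambda$ is not piecewise hereditary. The main obstacle is the middle step: extracting a single pair $(\alpha,\beta)$ that is simultaneously leftmost/rightmost extremal in $S_2$ and still overlaps by at least two arrows requires careful combinatorial bookkeeping, but it is routine given the finite vertex set.
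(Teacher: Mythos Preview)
Your overall strategy—feed the three-set hypothesis into Proposition~\ref{proposition:A9extended}—is the intended one, and conditions (3) and (4) follow exactly as you say. The problem lies in the choice of $(\alpha,\beta)$ and the verification of (1) and (2). You first pick $\alpha$ leftmost and $\beta$ rightmost among relations of $S_2$ lying in some overlap-$\geq 2$ pair, and then, if these two fail to overlap sufficiently, shift $\alpha$ to a ``suitable nearby relation''. But after that shift $\alpha$ is no longer extremal, so your argument for (1)—that a violating $r$ would yield a pair $(r,\alpha)$ strictly to the left of $\alpha$, contradicting the extremal choice—no longer applies.

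Concretely, take $S_2$ to consist of the four relations $5\to 8$, $6\to 12$, $7\to 13$, $8\to 14$, flanked by any length-$\ge 3$ relation in $S_1$ to the left of vertex $5$ and one in $S_3$ to the right of vertex $14$. Your procedure selects $\alpha=(5\to 8)$ and $\beta=(8\to 14)$, sees they do not overlap, and replaces $\alpha$ by $6\to 12$; but now $r=(5\to 8)$ violates condition (1) for the pair $(6\to 12,\,8\to 14)$, and nothing you have established excludes it. Nor does the natural repair help: alternately fixing (1) via $(\alpha,\beta)\mapsto(r,\alpha)$ and (2) via $(\alpha,\beta)\mapsto(\beta,r')$ cycles forever between $(5\to 8,\,6\to 12)$ and $(6\to 12,\,7\to 13)$, so ``finitely many vertices'' alone does not guarantee termination. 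The pair $(7\to 13,\,8\to 14)$ \emph{does} satisfy all of (1)--(4) here, so what is missing is a more careful extremal choice that controls both conditions simultaneously. (As a smaller point: deducing $r\in S_2$ from $r\notin S_1\cup S_3$ presumes the three sets exhaust all relations of $\Lambda$; this should be made explicit.)
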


\begin{example}
By Corollary~\ref{corollary:NonOverlappingRelSets} the algebra given by
\[ \begin{tikzcd}[column sep = 0.7em]
1 \ar[r] \ar[rrrr,bend left,densely dotted,-] & 2 \ar[r] & 3 \ar[r] & 4 \ar[r] & 5 \ar[r] & 6 \ar[r] \ar[rrrr,bend right,densely dotted,-] & 7 \ar[r] & 8 \ar[r] \ar[rrr,bend left,densely dotted,-] & 9 \ar[r] & 10 \ar[r] & 11 \ar[r] & 12 \ar[r] \ar[rrr,bend right,densely dotted,-] & 13 \ar[r] \ar[rrr, bend left, densely dotted, -]& 14 \ar[r] & 15 \ar[r] & 16
\end{tikzcd} \]
is not piecewise hereditary.
\end{example}

\bibliographystyle{amsplain}
\bibliography{main}

\providecommand{\bysame}{\leavevmode\hbox to3em{\hrulefill}\thinspace}
\providecommand{\MR}{\relax\ifhmode\unskip\space\fi MR }
% \MRhref is called by the amsart/book/proc definition of \MR.
\providecommand{\MRhref}[2]{%
  \href{http://www.ams.org/mathscinet-getitem?mr=#1}{#2}
}
\providecommand{\href}[2]{#2}
\begin{thebibliography}{10}

\bibitem{ASS2006}
Ibrahim Assem, Andrzej Skowronski, and Daniel Simson, \emph{Elements of the
  representation theory of associative algebras: Techniques of representation
  theory}, London Mathematical Society Student Texts, vol.~1, Cambridge
  University Press, 2006.

\bibitem{MR3896230}
Xiao-Wu Chen and Claus~Michael Ringel, \emph{Hereditary triangulated
  categories}, J. Noncommut. Geom. \textbf{12} (2018), no.~4, 1425--1444.

\bibitem{Fosse2021}
Didrik Fosse, \emph{A combinatorial procedure for tilting mutation}, 2021,
  arXiv:2112.08129.

\bibitem{Fosse2023}
\bysame, \emph{Quipu quivers and {N}akayama algebras with almost separate
  relations}, 2023, arXiv:2305.06642.

\bibitem{MR1827736}
Dieter Happel, \emph{A characterization of hereditary categories with tilting
  object}, Invent. Math. \textbf{144} (2001), no.~2, 381--398.

\bibitem{Happel-Reiten-Smalo}
Dieter Happel, Idun Reiten, and Sverre Smal\o, \emph{Piecewise hereditary
  algebras}, Arch. Math. (Basel) \textbf{66} (1996), no.~3, 182--186.

\bibitem{MR2736030}
Dieter Happel and Uwe Seidel, \emph{Piecewise hereditary {N}akayama algebras},
  Algebr. Represent. Theory \textbf{13} (2010), no.~6, 693--704.

\bibitem{MR2413349}
Dieter Happel and Dan Zacharia, \emph{A homological characterization of
  piecewise hereditary algebras}, Math. Z. \textbf{260} (2008), no.~1,
  177--185.

\bibitem{Nakayama1940}
Tadasi Nakayama, \emph{{Note on uni-serial and generalized uni-serial rings}},
  Proceedings of the Imperial Academy \textbf{16} (1940), no.~7, 285 -- 289.

\bibitem{MR3049570}
Claus~Michael Ringel, \emph{The {G}orenstein projective modules for the
  {N}akayama algebras. {I}}, J. Algebra \textbf{385} (2013), 241--261.

\end{thebibliography}
\end{document}